\theoremstyle{plain}
\newtheorem{theorem}{Theorem}
\newtheorem*{theorem*}{Theorem}
\newtheorem{corollary}{Corollary}
\newtheorem*{corollary*}{Corollary}
\newtheorem{lemma}{Lemma}
\newtheorem*{lemma*}{Lemma}
\newtheorem*{proposition*}{Proposition}
\newtheorem*{conjecture*}{Conjecture}
\theoremstyle{definition}
\newtheorem*{definition*}{Definition}
\theoremstyle{remark}
\newtheorem{remark}{Remark}
\newtheorem*{remark*}{Remark}
\theoremstyle{example}
\newtheorem{example}{Example}
\newtheorem*{example*}{Example}
\begin{document}
\title[Cantor series and functions]{On certain maps defined by infinite sums}

\author{Symon Serbenyuk}
\address{ 
  45~Shchukina St. \\
  Vinnytsia \\
 21012 \\
  Ukraine}
\email{simon6@ukr.net}

\subjclass[2010]{26A27, 11B34, 11K55, 39B22.}

% Key words
\keywords{nowhere differentiable function; singular function; expansion of real
number; non-monotonic function; Hausdorff dimension.}

\begin{abstract}

The present article is devoted to some examples of functions whose arguments represented in terms of certain series of the Cantor type. 

\end{abstract}
\maketitle

%% \thispagestyle{empty}

%%%%%%%%%%%%%%%%%%%%%%%%%%%%%%%%%%%%%%%%%%%%%%%%%%%%%%%%%%%%%%%%%%%%%%%%

\section{Introduction}

Let $Q\equiv (q_k)$ be a fixed sequence of positive integers, $q_k>1$,  $\Theta_k$ be a sequence of the sets $\Theta_k\equiv\{0,1,\dots ,q_k-1\}$, and $\varepsilon_k\in\Theta_k$.

Real number expansions of the form
\begin{equation}
\label{eq: Cantor series}
\frac{\varepsilon_1}{q_1}+\frac{\varepsilon_2}{q_1q_2}+\dots +\frac{\varepsilon_k}{q_1q_2\dots q_k}+\dots
\end{equation}
for $x\in [0,1]$,   first studied by G. Cantor in \cite{C1869}. It is easy to see that the last expansion is the  $q$-ary expansion
$$
\frac{\alpha_1}{q}+\frac{\alpha_2}{q^2}+\dots+\frac{\alpha_n}{q^n}+\dots
$$
of numbers  from  $[0,1]$ whenever the condition $q_k=q$ holds for all positive integers $k$. Here $q$ is a fixed positive integer, $q>1$, and $\alpha_n\in\{0,1,\dots , q-1\}$.

By $x=\Delta^Q _{\varepsilon_1\varepsilon_2\ldots\varepsilon_k\ldots}$  denote a number $x\in [0,1]$ represented by series \eqref{eq: Cantor series}. This notation is called \emph{the representation of $x$ by (positive) Cantor series \eqref{eq: Cantor series}.}

Let us remark that certain numbers from $[0,1]$ have two different representations by positive Cantor series \eqref{eq: Cantor series}, i.e., 
$$
\Delta^Q _{\varepsilon_1\varepsilon_2\ldots\varepsilon_{m-1}\varepsilon_m000\ldots}=\Delta^Q _{\varepsilon_1\varepsilon_2\ldots\varepsilon_{m-1}[\varepsilon_m-1][q_{m+1}-1][q_{m+2}-1]\ldots}=\sum^{m} _{i=1}{\frac{\varepsilon_i}{q_1q_2\dots q_i}}.
$$
Such numbers are called \emph{$Q$-rational}. The other numbers in $[0,1]$ are called \emph{$Q$-irrational}.

Now a number of researchers introduce and/or study alternating versions (types) of well-known positive expansions. For example, 
investigations of positive and alternating L\"uroth series and Engel series (e.g., see~\cite{Fang2015, KKK90, L1883, Engel}), as well as of $\beta$- and $(-\beta)$-expansions (\cite{IS2009, Renyi}) are such researhes. 

Since investigations for the cases of alternating expansions require more complicated techniques, let us consider functions whose arguments defined in terms of alternating series of the Cantor type. The present investigations are similar with investigations (\cite{Serbenyuk 2019 functions}) for positive Cantor series  but are more complicated. 

 In~\cite{S. Serbenyuk alternating Cantor series 2013},  the following expansions of real numbers were studied:
\begin{equation}
\label{eq: alternating Cantor series}
-\frac{\varepsilon_1}{q_1}+\frac{\varepsilon_2}{q_1q_2}-\frac{\varepsilon_3}{q_1q_2q_3}+\dots +\frac{(-1)^k\varepsilon_k}{q_1q_2\dots q_k}+\dots
\end{equation}
for $x\in [a_0-1,a_0]$,  where $a_0=\sum^{\infty} _{k=1}{\frac{q_{2k}-1}{q_1q_2\cdots q_{2k}}}$.  Here $-Q\equiv (-q_k)$ is a fixed sequence of negative integers $(-q_k)<-1$,  $\Theta_k\equiv\{0,1,\dots ,q_k-1\}$, and $\varepsilon_k\in\Theta_k$.

It is easy to see that the last expansion is the  nega-$q$-ary expansion
\begin{equation}
\label{eq: nega-q}
\Delta^{-q} _{\alpha_1\alpha_2\alpha_3...\alpha_k...}\equiv -\frac{\alpha_1}{q}+\frac{\alpha_2}{q^2}-\frac{\alpha_3}{q^3}+\dots+\frac{(-1)^k\alpha_k}{q^k}+\dots
\end{equation}
of numbers  from  $\left[-\frac{q}{q+1},\frac{1}{q+1}\right]$ whenever the condition $q_k=q$ holds for all positive integers $k$. Here $q$ is a fixed positive integer, $q>1$, and $\alpha_n\in\{0,1,\dots , q-1\}$. 

By $x=\Delta^{-Q} _{\varepsilon_1\varepsilon_2\ldots\varepsilon_k\ldots}$  denote a number $x\in [a_0-1,a_0]$ represented by series \eqref{eq: alternating Cantor series}. This notation is called \emph{the representation of $x$ by alternating Cantor series \eqref{eq: Cantor series} or the nega-$Q$-representation.}

The term ``nega" is used in this article, since the alternating Cantor series expansion is a numeral system with a negative base $(-q_k)$.

Some numbers  have two different representations by alternating  series \eqref{eq: alternating Cantor series}, i.e., 
$$
\Delta^{-Q} _{\varepsilon_1\varepsilon_2\ldots\varepsilon_{m-1}\varepsilon_m[q_{m+1}-1]0[q_{m+3}-1]0[q_{m+5}-1]\ldots}=\Delta^{-Q} _{\varepsilon_1\varepsilon_2\ldots\varepsilon_{m-1}[\varepsilon_m-1]0[q_{m+2}-1]0[q_{m+4}-1]0[q_{m+6}-1]\ldots}.
$$
Such numbers are called \emph{nega-$Q$-rational}. The other numbers in $[a_0-1,a_0]$ are called \emph{nega-$Q$-irrational}.

Suppose $c_1,c_2,\dots, c_m$ is  an
ordered tuple of integers such that $c_i\in\{0,1,\dots, q_i-~1\}$ for $i=\overline{1,m}$. Then \emph{a cylinder $\Delta^{-Q} _{c_1c_2...c_m}$ of rank $m$ with base $c_1c_2\ldots c_m$} is a set of the form
$$
\Delta^{-Q} _{c_1c_2...c_m}\equiv\{x: x=\Delta^{-Q} _{c_1c_2...c_m\varepsilon_{m+1}\varepsilon_{m+2}\ldots\varepsilon_{m+k}\ldots}\}.
$$
That is any cylinder $\Delta^{-Q} _{c_1c_2...c_m}$ is a closed interval of the form:
$$
\left[\Delta^{-Q} _{c_1c_2...c_m[q_{m+1}-1]0[q_{m+3}-1]0[q_{m+5}-1]...}, \Delta^{-Q} _{c_1c_2...c_m0[q_{m+2}-1]0[q_{m+4}-1]0[q_{m+6}-1]...}\right]~~~\text{if $m$ is even},
$$
$$
\left[\Delta^{-Q} _{c_1c_2...c_m0[q_{m+2}-1]0[q_{m+4}-1]0[q_{m+6}-1]...},\Delta^{-Q} _{c_1c_2...c_m[q_{m+1}-1]0[q_{m+3}-1]0[q_{m+5}-1]...}\right]~~~\text{if $m$ is odd}.
$$

Define \emph{the shift operator $\sigma$ of expansion \eqref{eq: alternating Cantor series}} by the rule
$$
\sigma(x)=\sigma\left(\Delta^{-Q} _{\varepsilon_1\varepsilon_2\ldots\varepsilon_k\ldots}\right)=\sum^{\infty} _{k=2}{\frac{(-1)^k\varepsilon_k}{q_2q_3\dots q_k}}=-q_1\Delta^{Q} _{0\varepsilon_2\ldots\varepsilon_k\ldots}.
$$
Whence,
\begin{equation}
\label{eq: Cantor series 2}
\begin{split}
\sigma^n(x) &=\sigma^n\left(\Delta^{-Q} _{\varepsilon_1\varepsilon_2\ldots\varepsilon_k\ldots}\right)\\
& =\sum^{\infty} _{k=n+1}{\frac{(-1)^{k-n}\varepsilon_k}{q_{n+1}q_{n+2}\dots q_k}}=(-1)^nq_1\dots q_n\Delta^{-Q} _{\underbrace{0\ldots 0}_{n}\varepsilon_{n+1}\varepsilon_{n+2}\ldots}.
\end{split}
\end{equation}
Therefore, 
\begin{equation}
\label{eq: Cantor series 3}
x=\sum^{n} _{i=1}{\frac{(-1)^i\varepsilon_i}{q_1q_2\dots q_i}}+\frac{(-1)^n}{q_1q_2\dots q_n}\sigma^n(x).
\end{equation}
 The notion of the shift operator of an alternating Cantor series was studied in detail  in the paper \cite{S. Serbenyuk alternating Cantor series 2013}.

In \cite{Salem1943}, the following singular function 
$$
s(x)=s\left(\Delta^2 _{\alpha_1\alpha_2...\alpha_n...}\right)=\beta_{\alpha_1}+ \sum^{\infty} _{n=2} {\left(\beta_{\alpha_n}\prod^{n-1} _{i=1}{q_i}\right)}=y=\Delta^{Q_2} _{\alpha_1\alpha_2...\alpha_n...},
$$
where $q_0>0$, $q_1>0$, and $q_0+q_1=1$, was modeled by Salem. Note that generalizations of the Salem function can be non-differentiable functions or do not have a derivative on a certain set. 

Let us consider the following generalizations of the Salem function that are described in the paper \cite{S. Serbenyuk systemy rivnyan 2-2} as well. 

\begin{example}[\cite{Symon2015}]
\label{example: 1}
{\rm Let $(q_n)$ is a fixed sequence of positive integers, $q_n>1$, and $(A_n)$ is a sequence of the sets  $\Theta_n = \{0,1,\dots,q_n-1\}$.

Let $x\in [0,1]$ be an arbitrary number represented by a positive Cantor series
\begin{equation*}
%\label{series1}
x=\Delta^Q _{\varepsilon_1\varepsilon_2...\varepsilon_n...}=\sum^{\infty} _{n=1} {\frac{\varepsilon_n}{q_1q_2\dots q_n}}, ~\mbox{where}~\varepsilon_n \in \Theta_n.
\end{equation*}

Let $P=||p_{i,n}||$  be a fixed matrix such that  $p_{i,n}\in (-1,1)$ ($ n=1,2,\dots ,$ and $i=~\overline{0,q_n-1}$), $\sum^{q_n-1} _{i=0} {p_{i,n}}=1$ for an arbitrary $n \in \mathbb N$, and $\prod^{\infty} _{n=1}{p_{i_n,n}}=0$ for any sequence   $(i_n)$.

Suppose that elements of the matrix $P=||p_{i,n,}||$ can be negative numbers as well but   
$$
\beta_{0,n}=0, \beta_{i,n}>0 ~\mbox{for}~ i\ne 0, ~\mbox{and} ~ \max_i {|p_{i,n}|} <1.
$$
Here 
$$
\beta_{\varepsilon_{k},k}=\begin{cases}
0&\text{if $\varepsilon_{k}=0$}\\
\sum^{\varepsilon_{k}-1} _{i=0} {p_{i,k}}&\text{if $\varepsilon_{k}\ne 0$.}
\end{cases}
$$
Then the following statement is true.
\begin{theorem}[\cite{Symon2015}]
Given the matrix $P$  such that for all $n \in \mathbb N$ the following are true:  $p_{\varepsilon_n,n}\cdot p_{\varepsilon_n-1,n}<0$ moreover $q_n \cdot~p_{d_n-1,n}\ge 1$ or  $q_n \cdot p_{q_n-1,n}\le 1$; and the  conditions 
$$
\lim_{n \to \infty} {\prod^{n} _{k=1} {q_k p_{0,k}}}\ne  0, \lim_{n \to \infty} {\prod^{n} _{k=1} {q_k p_{q_k-1,k}}}\ne 0
$$
hold simultaneously.
Then the function  
$$
F(x)=\beta_{\varepsilon_1(x),1}+\sum^{\infty} _{k=2} {\left(\beta_{\varepsilon_k(x),k}\prod^{k-1} _{n=1} {p_{\varepsilon_n(x),n}}\right)}
$$
is non-differentiable on  $[0,1]$.
\end{theorem}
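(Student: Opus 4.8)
The plan is to reduce differentiability of $F$ at a point to the behaviour of the mean slopes of $F$ across the cylinders containing that point, and then to play the alternation of signs in the rows of $P$ off against a lower bound on those slopes.

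First I would compute the increment of $F$ over a cylinder. For a $Q$-irrational $x=\Delta^Q_{\varepsilon_1\varepsilon_2\ldots}$ set $x'_n=\Delta^Q_{\varepsilon_1\ldots\varepsilon_n000\ldots}$ and $x''_n=\Delta^Q_{\varepsilon_1\ldots\varepsilon_n[q_{n+1}-1][q_{n+2}-1]\ldots}$ for the endpoints of $\Delta^Q_{\varepsilon_1\ldots\varepsilon_n}$. Since $\sum_{i=0}^{q_k-1}p_{i,k}=1$ gives $\beta_{q_k-1,k}=1-p_{q_k-1,k}$, the tail of $F(x''_n)$ telescopes, and with $\prod_np_{i_n,n}=0$ this yields
\[
F(x''_n)-F(x'_n)=\prod_{j=1}^{n}p_{\varepsilon_j,j},\qquad x''_n-x'_n=\frac{1}{q_1q_2\cdots q_n},
\]
so that the cylinder slope is $B_n:=\dfrac{F(x''_n)-F(x'_n)}{x''_n-x'_n}=\prod_{j=1}^{n}q_jp_{\varepsilon_j,j}=:D_n$. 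Repeating the computation on the adjacent cylinder $\Delta^Q_{\varepsilon_1\ldots\varepsilon_{n-1}[\varepsilon_n-1]}$, with left endpoint $z_n=\Delta^Q_{\varepsilon_1\ldots\varepsilon_{n-1}[\varepsilon_n-1]000\ldots}$ (use $\varepsilon_n+1$ in place of $\varepsilon_n-1$ when $\varepsilon_n=0$) and $\beta_{\varepsilon_n,n}-\beta_{\varepsilon_n-1,n}=p_{\varepsilon_n-1,n}$, I obtain a second slope $A_n:=\dfrac{F(x'_n)-F(z_n)}{x'_n-z_n}=D_{n-1}\,q_np_{\varepsilon_n-1,n}$.

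Next I would invoke the elementary increment lemma: if $F'(x)$ exists and $[c_n,d_n]$ are intervals with $d_n-c_n\to0$ for which $\frac{|x-c_n|}{d_n-c_n}$ and $\frac{|x-d_n|}{d_n-c_n}$ stay bounded, then $\frac{F(d_n)-F(c_n)}{d_n-c_n}\to F'(x)$. The cylinder $[x'_n,x''_n]$ contains $x$, and the abutting interval $[z_n,x'_n]$ lies just to the left of $x$ with the two ratios bounded by $1$ and $2$; hence both hypotheses hold, and differentiability of $F$ at $x$ would force $A_n\to F'(x)$ and $B_n\to F'(x)$. But $p_{\varepsilon_n,n}\,p_{\varepsilon_n-1,n}<0$ makes $A_n$ and $B_n$ of opposite sign for every $n$, and two such sequences can share a limit only if that limit is $0$; thus necessarily $F'(x)=0$.

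It remains to rule out $F'(x)=0$, which I expect to be the crux. The remaining hypotheses are exactly what is needed here: the per-step inequalities on $q_np_{i,n}$ (the forms ``$\ge1$''/``$\le1$'' encoding the sign pattern) give a lower bound $q_n|p_{\varepsilon_n,n}|\ge1$, hence $|B_n|=|D_n|=\prod_{j=1}^{n}q_j|p_{\varepsilon_j,j}|\ge1$, so $B_n\not\to0$; and the non-vanishing conditions $\lim_n\prod_{k=1}^{n}q_kp_{0,k}\ne0$, $\lim_n\prod_{k=1}^{n}q_kp_{q_k-1,k}\ne0$ supply the same lower bound along the two extremal digit-strings, where $z_n$ and the endpoints $x'_n,x''_n$ degenerate and the bracketing above must be rearranged. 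Either way $B_n\not\to0$, contradicting $F'(x)=0$; thus $F$ is differentiable at no $Q$-irrational point, and the countably many $Q$-rational points are settled by the same two-slope comparison applied to one-sided difference quotients. The genuinely delicate step is to combine the magnitude bound on $q_np_{i,n}$ with the prescribed alternation of signs so that, simultaneously, $A_n$ and $B_n$ remain comparable in magnitude yet opposite in sign.
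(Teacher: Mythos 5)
The paper does not actually prove this theorem: it is quoted verbatim (inside Example~1) from the cited source \cite{Symon2015}, so there is no in-paper proof to compare against. Your argument is the standard Salem-type non-differentiability scheme that this family of results uses, and its skeleton is sound: the telescoping identity $\beta_{q_k-1,k}=1-p_{q_k-1,k}$ together with $\prod_n p_{i_n,n}=0$ does give $F(x''_n)-F(x'_n)=\prod_{j\le n}p_{\varepsilon_j,j}$, hence the two cylinder slopes $B_n=D_{n-1}q_np_{\varepsilon_n,n}$ and $A_n=D_{n-1}q_np_{\varepsilon_n-1,n}$; the increment lemma with bounded ratios is correctly invoked for the abutting interval; and the sign condition $p_{\varepsilon_n,n}p_{\varepsilon_n-1,n}<0$ forces any common limit to be $0$.

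The one genuinely weak link is exactly the step you flag as the crux. Your conclusion $|B_n|\ge 1$ requires the per-digit bound $q_n|p_{\varepsilon_n,n}|\ge 1$ for \emph{every} digit value that can occur, but the hypothesis as printed (``$q_n\cdot p_{d_n-1,n}\ge 1$ or $q_n\cdot p_{q_n-1,n}\le 1$'', with $d_n$ undefined) does not literally say this; you have silently adopted the only reading under which the argument closes, namely that each $q_np_{i,n}$ has modulus at least $1$, so that $|D_n|$ is non-decreasing and bounded away from $0$. Under that reading the two limit conditions $\lim_n\prod_k q_kp_{0,k}\ne 0$ and $\lim_n\prod_k q_kp_{q_k-1,k}\ne 0$ become largely redundant rather than ``supplying the same lower bound along the extremal digit-strings'' as you assert, which suggests the intended hypotheses apportion the work differently (per-step bounds for some indices, limit conditions for the extremal ones). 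You should either state explicitly the corrected hypothesis you are using, or consult \cite{Symon2015} for the uncorrupted statement; as written, the final contradiction with $F'(x)=0$ is not derived from the hypotheses actually given. The treatment of digits $\varepsilon_n=0$ and of $Q$-rational points is only sketched but is routine.
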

} 
\end{example}
\begin{example}[\cite{Symon2017}]
\label{ex: example 2}
{\rm
Let
$P=||p_{i,n}||$ be a given matrix such that  $n=1,2, \dots$ and $i=\overline{0,q_n-1}$. For this matrix the following system of properties  holds: 
$$ 
\left\{
\begin{aligned}
\label{eq: tilde Q 1}
1^{\circ}.~~~~~~~~~~~~~~~~~~~~~~~~~~~~~~~~~~~~~~~~~~~~~~~\forall n \in \mathbb N:  p_{i,n}\in (-1,1)\\
2^{\circ}.  ~~~~~~~~~~~~~~~~~~~~~~~~~~~~~~~~~~~~~~~~~~~~~~~~\forall n \in \mathbb N: \sum^{q_n-1}_{i=0} {p_{i,n}}=1\\
3^{\circ}. ~~~~~~~~~~~~~~~~~~~~~~~~~~~~~~~~~~~~~ \forall (i_n), i_n \in  \Theta_{n}: \prod^{\infty} _{n=1} {|p_{i_n,n}|}=0\\
4^{\circ}.~~~~~~~~~~~~~~\forall  i_n \in \Theta_{n}\setminus\{0\}: 1>\beta_{i_n,n}=\sum^{i_n-1} _{i=0} {p_{i,n}}>\beta_{0,n}=0.\\
\end{aligned}
\right.
$$ 

 Let us consider the following function 
$$ 
\tilde{F}(x)=\beta_{\varepsilon_1(x),1}+\sum^{\infty} _{n=2} {\left(\tilde{\beta}_{\varepsilon_n(x),n}\prod^{n-1} _{j=1} {\tilde{p}_{\varepsilon_j(x),j}}\right)},
$$
where
$$
\tilde{\beta}_{\varepsilon_n(x),n}=\begin{cases}
\beta_{\varepsilon_n(x),n}&\text{if $n$ is   odd }\\
\beta_{q_n-1-\varepsilon_n(x),n}&\text{if $n$ is  even,}
\end{cases}
$$
$$
\tilde{p}_{\varepsilon_n(x),n}=\begin{cases}
p_{\varepsilon_n(x),n}&\text{if $n$  is odd }\\
p_{q_n-1-\varepsilon_n(x),n}&\text{if $n$  is   even,}
\end{cases}
$$
$$
\beta_{\varepsilon_{n}(x),n}=\begin{cases}
0&\text{if $\varepsilon_{n}=0$}\\
\sum^{\varepsilon_{n}-1} _{i=0} {p_{i,n}}&\text{if $\varepsilon_{n}\ne 0$.}
\end{cases}
$$
Here $x$ represented by an alternating Cantor series, i.e., 
$$
x=\Delta^{-(q_n)} _{\varepsilon_1\varepsilon_2...\varepsilon_n...}=\sum^{\infty} _{n=1} {\frac{1+\varepsilon_n}{q_1q_2\dots q_n}(-1)^{n+1}},
$$
where $(q_n)$ is a fixed sequence of positive integers, $q_n>1$, and $(\Theta_{n})$ is a sequence of the sets  $\Theta_{n} = \{0,1,\dots,q_n-1\}$, and $\varepsilon_n\in \Theta_{n}$.
\begin{theorem}
Let  $p_{\varepsilon_n,n}\cdot p_{\varepsilon_n-1,n}<0$  for all $n \in \mathbb N$, $\varepsilon_n \in \Theta_{n} \setminus \{0\}$ and conditions 
$$
\lim_{n \to \infty} {\prod^{n} _{k=1} {q_k p_{0,k}}}\ne  0, \lim_{n \to \infty} {\prod^{n} _{k=1} {q_k p_{q_k-1,k}}}\ne 0
$$
hold simultaneously.  Then the function $\tilde{F}$ is  non-differentiable on $[0,1]$. 
\end{theorem}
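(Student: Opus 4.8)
The plan is to prove non-differentiability pointwise: fix an arbitrary $x_0=\Delta^{-Q}_{\varepsilon_1\varepsilon_2\ldots}\in[0,1]$ and produce, for this $x_0$, difference quotients over shrinking intervals that stay bounded away from zero yet change sign, so that $\tilde F'(x_0)$ can exist neither finitely nor as $\pm\infty$. The backbone is the standard reduction: if $\tilde F'(x_0)=L$ existed, then for any intervals $[u_n,v_n]$ with $u_n\le x_0\le v_n$, $u_n<v_n$ and $u_n,v_n\to x_0$, the quotient $\frac{\tilde F(v_n)-\tilde F(u_n)}{v_n-u_n}$ would converge to $L$, since it is a convex combination of the two one-sided quotients at $x_0$. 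I will only test intervals that are cylinders (and subcylinders) of $x_0$, so the convex weights automatically stay in $[0,1]$.

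First I would establish a self-similar increment formula for $\tilde F$ over a cylinder. Splitting the defining series of $\tilde F$ at rank $n$ and using the shift operator \eqref{eq: Cantor series 2}, one obtains $\tilde F(x)=A_n(\varepsilon_1,\ldots,\varepsilon_n)+\bigl(\prod_{k=1}^{n}\tilde p_{\varepsilon_k,k}\bigr)\tilde F_n(\sigma^n x)$, where $A_n$ is constant on the rank-$n$ cylinder $\Delta^{-Q}_{\varepsilon_1\cdots\varepsilon_n}$ and $\tilde F_n$ is the analogous series in the tail digits, whose oscillation over its whole domain is a positive constant (in fact $1$). Hence the oscillation of $\tilde F$ over $\Delta^{-Q}_{\varepsilon_1\cdots\varepsilon_n}$ equals $\prod_{k=1}^{n}|\tilde p_{\varepsilon_k,k}|$, while the Lebesgue length of that cylinder is $1/(q_1\cdots q_n)$. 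Thus a difference quotient taken across the cylinder has magnitude $\prod_{k=1}^{n}q_k|\tilde p_{\varepsilon_k,k}|$, and the same computation gives the magnitudes for the rank-$(n+1)$ subcylinders, differing only by the extra factor $q_{n+1}|\tilde p_{\varepsilon_{n+1},n+1}|$.

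Next I would extract the change of sign and the lower bound on the magnitude. The hypothesis $p_{\varepsilon_n,n}p_{\varepsilon_n-1,n}<0$ for every $n$ and every $\varepsilon_n\in\Theta_n\setminus\{0\}$ says precisely that $p_{0,n},\ldots,p_{q_n-1,n}$ alternate in sign; since the even-rank substitution $\tilde p_{\cdot,n}=p_{q_n-1-\cdot,n}$ only reverses their order, the alternation is inherited by $\tilde p_{\cdot,n}$. By the increment formula this makes $\tilde F$ non-monotone at every scale: within each rank-$n$ cylinder the increments of $\tilde F$ over consecutive rank-$(n+1)$ subcylinders have opposite signs. Comparing the one-sided quotients $d_n^{+}=\frac{\tilde F(x''_n)-\tilde F(x_0)}{x''_n-x_0}$ and $d_n^{-}=\frac{\tilde F(x_0)-\tilde F(x'_n)}{x_0-x'_n}$ built from the endpoints $x'_n\le x_0\le x''_n$ of the rank-$n$ cylinder, the oscillation of $\tilde F$ on the two sides of $x_0$ forces $d_n^{+}$ and $d_n^{-}$ to disagree along a subsequence, whereas both would have to tend to $L$ if the derivative existed. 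What keeps this disagreement from washing out is the magnitude control: the extreme subcylinders, with last digit $0$ or $q_n-1$, carry quotient magnitudes governed by the partial products $\prod_{k=1}^{n}q_kp_{0,k}$ and $\prod_{k=1}^{n}q_kp_{q_k-1,k}$ (after matching the parity reindexing), and the assumptions $\lim_n\prod_{k=1}^{n}q_kp_{0,k}\ne0$ and $\lim_n\prod_{k=1}^{n}q_kp_{q_k-1,k}\ne0$ keep these bounded away from $0$. Together, sign alternation and this bound yield, for the given $x_0$, difference quotients that are bounded away from $0$ and change sign, contradicting the existence of a finite derivative and simultaneously ruling out an infinite one. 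As $x_0$ is arbitrary, $\tilde F$ is nowhere differentiable on $[0,1]$.

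The principal obstacle is the bookkeeping behind the two preceding paragraphs. I expect the increment formula itself to be the delicate point: one must verify the self-similar decomposition and the constant oscillation of the tail function $\tilde F_n$ in the presence of the parity-dependent substitutions $\tilde\beta_{\cdot,n},\tilde p_{\cdot,n}$ and of the orientation reversal of cylinders for even versus odd rank recorded just before \eqref{eq: Cantor series 2}. A second delicate point is uniformity when $q_n$ is unbounded: the convex-combination reduction is safe only for test intervals that actually contain $x_0$, so I would always take subcylinders of the single cylinder containing $x_0$ rather than neighbouring cylinders, keeping the weights in $[0,1]$. Finally, the countably many nega-$Q$-rational points have two representations and must be handled separately, by running the one-sided version of the argument from whichever representation places $x_0$ in the interior of the tested cylinders.
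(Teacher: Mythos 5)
This theorem is only quoted in the paper (it is imported in Example~2 from the reference [Symon2017]); the present paper contains no proof of it, so there is no internal argument to compare yours against and the proposal must be judged on its own merits.

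Judged that way, the central step that is supposed to keep your difference quotients bounded away from zero has a genuine gap. The quotient over the rank-$n$ cylinder containing $x_0$ is $\prod_{k=1}^{n} q_k\tilde p_{\varepsilon_k(x_0),k}$, whose factors are indexed by the digits of $x_0$. The hypotheses $\lim_n\prod_{k=1}^{n}q_kp_{0,k}\ne 0$ and $\lim_n\prod_{k=1}^{n}q_kp_{q_k-1,k}\ne 0$ control the products along the two \emph{constant} digit strings $00\ldots$ and $[q_1-1][q_2-1]\ldots$ only; they give no lower bound on $\prod_{k}q_k|\tilde p_{\varepsilon_k(x_0),k}|$ for a general $x_0$. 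Your claim that the extreme rank-$(n+1)$ subcylinders ``carry quotient magnitudes governed by $\prod_{k=1}^{n}q_kp_{0,k}$ and $\prod_{k=1}^{n}q_kp_{q_k-1,k}$'' is not correct: those subcylinders still begin with the first $n$ digits of $x_0$, so their quotients are $\bigl(\prod_{k\le n}q_k\tilde p_{\varepsilon_k(x_0),k}\bigr)\,q_{n+1}\tilde p_{i,n+1}$ with $i\in\{0,q_{n+1}-1\}$; only the last factor comes from the hypothesized products. At a point where $\prod_kq_k|\tilde p_{\varepsilon_k(x_0),k}|\to 0$ (which condition $3^{\circ}$ does not forbid) every quotient you test tends to $0$, and sign alternation then proves nothing: everything you exhibit is consistent with $\tilde F'(x_0)=0$. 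A different mechanism is needed at such points, and that is precisely where the two limit hypotheses must actually enter (they attach naturally to cylinder endpoints, whose digit tails are the constant strings $0$ and $q_k-1$ up to the parity substitution).

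A second, related gap: the opposite-sign increments you produce sit on \emph{adjacent} rank-$(n+1)$ subcylinders, only one of which contains $x_0$. Along the nested cylinders that do contain $x_0$ the sign of the increment is determined by the digits of $x_0$ (condition $4^{\circ}$ forces $p_{0,n}>0$, so the hypothesis gives $\mathrm{sgn}\,p_{i,n}=(-1)^{i}$, and the sign of $\prod_{k\le n}\tilde p_{\varepsilon_k,k}$ is a fixed function of $\varepsilon_1,\ldots,\varepsilon_n$); for suitable $x_0$ it never changes, so your convex-combination reduction applied only to nested cylinders yields no contradiction. To use the neighbouring subcylinder you must form a straddling interval $[u_n,v_n]\ni x_0$ whose increment is the \emph{sum} of two opposite-sign increments, and then show this sum does not cancel relative to the interval length --- exactly the estimate your outline defers to ``bookkeeping.'' As written, neither point is closed, so the proposal does not yet constitute a proof.
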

 }
\end{example}

In the present article, two examples  of certain functions with complicated local structure, are constructed and investigated. 

Suppose  that the condition $q_n\le q$ holds for all positive integers $n$.
The first function is following:
$$
f: ~~~ x=\Delta^{-Q} _{\varepsilon_1\varepsilon_2... \varepsilon_n...} ~\longrightarrow~\Delta^{-q} _{\varepsilon_1\varepsilon_2... \varepsilon_n...}=y.
$$

The functon $f$ is interesting, since a some function, whose almost all properties (all properties without the domain of definition; the domains of definition of these functions are different intervals) are identical with properties of  the function described in Example~\ref{ex: example 2}, can be represented by the following way:
$$
F(x)=\ddot F_{\xi, Q} \circ g \circ f.
$$ 
 Here by ``$\circ$" denote the operation of composition of functions and $g(x)=x-\Delta^{-q} _{([q-1]0)}$. Also, the function $\ddot F_{\xi, Q}$ is a function of the type:
$$
\ddot F_{\eta, Q}(y)=\ddot\beta_{\varepsilon_1(y),1}+\sum^{\infty} _{k=2} {\left({\ddot\beta}_{\varepsilon_k(y),k} \prod^{k-1} _{j=1} {{\ddot p}_{\varepsilon_j(y),j}}\right)},
$$
where 
$$
y=\Delta^{-q} _{\varepsilon_1\varepsilon_2... \varepsilon_n...}-\Delta^q _{[q_1-1]0[q_3-1]0...[q_{2k-1}-1]0...}=\Delta^{-q} _{[q_1-1-\varepsilon_1]0[q_3-1-\varepsilon_3]0...[q_{2k-1}-1-\varepsilon_{2k-1}]0...}.
$$

Note that the function $\ddot F_{\eta, q}$ is a distribution function of a certain random variable $\eta$ whenever elements $ p_{i,n}$ of the matrix $P$ (this matrix described in the last-mentioned examples) are non-negative and 
$$
\ddot{p}_{\varepsilon_n(x),n}=\begin{cases}
p_{\varepsilon_n(x),n}&\text{if $n$  is even }\\
p_{q_n-1-\varepsilon_n(x),n}&\text{if $n$  is  odd},
\end{cases}
$$
$$
\ddot{\beta}_{\varepsilon_n(x),n}=\begin{cases}
\beta_{\varepsilon_n(x),n}&\text{if $n$  is even }\\
\beta_{q_n-1-\varepsilon_n(x),n}&\text{if $n$  is  odd}.
\end{cases}
$$

\begin{remark}{\cite{Serbenyuk 2019 functions}.}
``In the general case, suppose that $(f_n)$ is a finite or infinite sequence of certain functions (the sequence can contain functions with complicated local structure). Let us consider the corresponding composition of the functions
$$
\ldots \circ f_n \circ \ldots \circ f_2 \circ f_1=f_{c,\infty}
$$
or
$$
 f_n \circ \ldots \circ f_2 \circ f_1=f_{c,n}.
$$
Also, we can take a certain part of the composition, i.e.,
$$
 f_{n_0+t} \circ \ldots \circ f_{n_0+1} \circ f_{n_0}=f_{c,\overline{n_0,{n_0+t}}},
$$
where $n_0$ is a fixed positive integer (a number from the set $\mathbb N$), $t\in \mathbb Z_0=\mathbb N \cup\{0\}$, and $n_0+t \le n$.

One can use such technique for modeling and studying  functions with complicated local structure. Also, one can use  new representations of real numbers (numeral systems) of the type
$$
x^{'}=\Delta^{f_{c,\infty}} _{i_1i_2...i_n}=\ldots \circ f_n \circ \ldots \circ f_2 \circ f_1(x), 
$$
$$
x^{'}=\Delta^{f_{c,n}} _{i_1i_2...i_n}=f_n \circ \ldots \circ f_2 \circ f_1(x)
$$
or
$$
z^{'}=\Delta^{f_{c,\overline{n_0,{n_0+t}}}} _{i_1i_2...i_n}=f_{n_0+t} \circ \ldots \circ f_{n_0+1} \circ f_{n_0}(z).
$$
in fractal theory, applied   mathematics, etc. The next articles of the author of the present article will be devoted to such investigations".
\end{remark}

\begin{remark}
One can extend the last remark by the following. Really, compositions of functions are useful for modeling functions with complicated local structure. However, for modeling such functions one can use systems of functional equation containing compositions of functions. For example, 
$$
f\left(\underbrace{g\circ g \circ \ldots \circ g (x)}_{k-1}\right)=a_k+b_kf\left(\underbrace{g\circ g \circ \ldots \circ g (x)}_{k}\right),
$$
where $f,g$ are some functions, $a_k,b_k \in\mathbb R$.

For example, in~\cite{Serbenyuk2019}, a technique for modeling certain generalizations of the singular Salem function is introduced. That is, 
$$
f\left(\sigma_{n_{k-1}}\circ \sigma_{n_{k-2}}\circ \ldots \circ \sigma_{n_1}(x)\right)=\beta_{\alpha_{n_k}, n_k}+p_{\alpha_{n_k}, n_k}f\left(\sigma_{n_{k}}\circ \sigma_{n_{k-1}}\circ \ldots \circ \sigma_{n_1}(x)\right),
$$
where $k=1,2,\dots$, $\sigma_0(x)=x$, and $x$ represented in terms of a certan given numeral system, i.e., $x=\Delta_{\alpha_1\alpha_2...\alpha_k...}$ and $\alpha_n\in\{0,1,\dots , m_n\}$ for all positive integers $n$. Here $(\sigma_{n_k})$ is a sequence of certain functions and $\beta_{\alpha_{n_k}, n_k}, p_{\alpha_{n_k}, n_k}$ are some real numbers. Note that a given numeral system can be with a finite or infinite, constant or  removable (or variable when $\alpha_i\in A_i\ne A_j\ni \alpha_j$ for some $i\ne j$)
alphabet. 

So, these problems introduce the problem on functional equations and systems of functional equations with several variables, on functional equations and systems of functional equations with  compositions of functions. 

In addition, one can consider expansions of functions and numbers by  complicated compositions of functions:
$$
\ldots \circ f_n \left( g^{(n)} _{m_n} \circ \ldots \circ g^{(n)} _{1}(x)\right)\circ \ldots \circ f_2 \left(g^{(2)} _{m_2} \circ \ldots \circ g^{(2)} _{1}(x)\right)\circ f_1 \left(g^{(1)} _{m_1} \circ \ldots \circ g^{(1)} _{1}(x)\right).
$$
Here $g^{(n)} _{m_n}, f_n $ are certain functions. One can consider partial cases of the last-mentioned complicated composition and the case when $m_n=\infty$. 

The next articles of the author of the present article will be devoted to such investigations.
\end{remark}

The second map considered in this article is useful for modeling fractals in space~$\mathbb R^2$. That is, the map
$$
f: x=\Delta^{-q} _{\underbrace{u\ldots u}_{\alpha_1-1}\alpha_1\underbrace{u\ldots u}_{\alpha_2-1}\alpha_2\ldots \underbrace{u\ldots u}_{\alpha_n-1}\alpha_n\ldots} \longrightarrow \Delta^{-q} _{\alpha_1\alpha_2...\alpha_n...},
$$
where $u\in \{0,1, \dots , q-1\}$ is a fixed number,  $\alpha_n \in\{1,2, \dots , q-1\}\setminus\{u\}$, and $3<q$ is a fixed positive integer, models a certain fractal in $\mathbb R^2$.

\section{One function defined in terms of alternating Cantor series}

Let us consider the function
$$
f(x)=f\left(\Delta^{-Q} _{\varepsilon_1\varepsilon_2... \varepsilon_n...}\right)=f\left(\sum^{\infty} _{n=1}{\frac{(-1)^n\varepsilon_n}{q_1q_2\cdots q_n}}\right)=\sum^{\infty} _{n=1}{\frac{\varepsilon_n}{(-q)^n}}=\Delta^{-q} _{\varepsilon_1\varepsilon_2... \varepsilon_n...}=y,
$$
where $\varepsilon_n\in\Theta_n$ and the condition $q_n\le q$ holds for all positive integers $n$.

\begin{lemma}
The function $f$ has the following properties:
\begin{enumerate}
\item $D(f)=[a_0-1,a_0]$, where $D(f)$ is the domain of definition of $f$ and
$$
a_0=\sum^{\infty} _{k=1}{\frac{q_{2k}-1}{q_1q_2\cdots q_{2k}}}=\sum^{\infty} _{k=1}{\frac{(-1)^{k+1}}{q_1q_2\cdots q_k}}, ~~~a_0-1=-\sum^{\infty} _{k=1}{\frac{q_{2k-1}-1}{q_1q_2\cdots q_{2k-1}}};
$$

\item If $E(f)$ is the range of values of $f$, then:
\begin{itemize}
\item $E(f)=[-\frac{q}{q+1},\frac{1}{q+1}]$ whenever the condition $q_n=q$ holds for all positive integers $n$,

\item $E(f)=[-\frac{q}{q+1},\frac{1}{q+1}]\setminus C_f$, where $C_f=C_1\cup C_2$,
$$
C_1=\left\{y: y=\Delta^{-q} _{\varepsilon_1\varepsilon_2... \varepsilon_{n}}, \varepsilon_n\notin\{q_n,q_n+1,\dots , q-1\}\text{for all $n$ such that $q_n<q$}\right\}
$$
and
$$
C_2=\left\{y: y=\Delta^q _{\varepsilon_1\varepsilon_2... \varepsilon_{n-1}[\varepsilon_{n}-1]0[q_{n+2}-1]0[q_{n+4}-1]0[q_{n+6}-1]...}\right\};
$$
\end{itemize}

\item $f(x)+f(a_0-x)=f(a_0)\le 1$;

\item $f\left(\sigma^k(x)\right)=\sigma^k\left(f(x)\right)$ for any $k\in \mathbb N$.
\end{enumerate}
\end{lemma}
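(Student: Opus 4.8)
The plan is to read every assertion through the single structural fact that $f$ merely \emph{recodes a digit string}: it sends the point with nega-$Q$-symbols $(\varepsilon_n)$ to the point carrying the same symbols in the constant nega-base $q$. Throughout I would abbreviate $b_n=\frac{1}{q_1q_2\cdots q_n}$ (with $b_0=1$) and lean on the telescoping identity $\frac{q_n-1}{q_1\cdots q_n}=b_{n-1}-b_n$, which is the computational engine behind (1) and (3).

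\emph{Property (1).} The domain of $f$ is exactly the set of sums of series \eqref{eq: alternating Cantor series}, whose interval and endpoints are already recorded in the Introduction, so here I would only verify the two closed forms. The maximum of the alternating series is attained by taking $\varepsilon_n=0$ at odd $n$ and $\varepsilon_n=q_n-1$ at even $n$ (annihilating the negative terms and maximizing the positive ones), giving $a_0=\sum_k\frac{q_{2k}-1}{q_1\cdots q_{2k}}$; substituting $\frac{q_{2k}-1}{q_1\cdots q_{2k}}=b_{2k-1}-b_{2k}$ and collapsing the telescope yields $a_0=\sum_k(-1)^{k+1}b_k$. The minimum comes from the opposite choice and the same telescope produces $a_0-1=-\sum_k\frac{q_{2k-1}-1}{q_1\cdots q_{2k-1}}$. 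As a consistency check, the two extremal strings are digit-complementary, so their values $a_0$ and $a_0-1$ are symmetric about the midpoint $a_0-\tfrac{1}{2}$, confirming that the interval has length $1$.

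\emph{Property (4).} This is the cleanest item and in spirit I would settle it first. Formula \eqref{eq: Cantor series 2} shows that on the alternating side $\sigma^k$ is, up to the explicit factor $(-1)^kq_1\cdots q_k$, the left-shift that deletes $\varepsilon_1,\dots,\varepsilon_k$ and leaves the alternating Cantor series with symbols $(\varepsilon_{k+1},\varepsilon_{k+2},\dots)$ and tail sequence $(q_{k+1},q_{k+2},\dots)$. Since $q_{k+j}\le q$ persists, applying $f$ to this shifted point returns $\Delta^{-q}_{\varepsilon_{k+1}\varepsilon_{k+2}\dots}$. On the other hand, the shift for the constant base $q$ sends $f(x)=\Delta^{-q}_{\varepsilon_1\varepsilon_2\dots}$ to the point with symbols $(\varepsilon_{k+1},\varepsilon_{k+2},\dots)$ as well. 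Thus $f\circ\sigma^k$ and $\sigma^k\circ f$ both act as the left shift on symbol strings, which is the assertion; the only thing to confirm is that truncating the base sequence does not violate $q_n\le q$, which it cannot.

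\emph{Property (2).} When $q_n=q$ for every $n$, $f$ is literally the identity on $\left[-\frac{q}{q+1},\frac{1}{q+1}\right]$ and surjectivity is immediate. When $q_n<q$ for some $n$, the symbol $\varepsilon_n$ never reaches a value in $\{q_n,\dots,q-1\}$, so the image misses every $y$ \emph{all} of whose nega-$q$ representations place a forbidden symbol in such a slot; this is the content of $C_1$, while the two-representation phenomenon for nega-$q$-rational points forces the boundary correction $C_2$. The work is careful bookkeeping: a point belongs to $E(f)$ as soon as \emph{one} of its (at most two) nega-$q$ representations uses only admissible symbols, so the removed set must be described with both representations at once, which is precisely why $C_f$ splits as $C_1\cup C_2$.

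\emph{Property (3).} This is the step I expect to be the genuine obstacle. The natural route is to prove the stronger additive identity $f(a_0-x)=f(a_0)-f(x)$ and then rearrange. Writing $a_0=\Delta^{-Q}_{0[q_2-1]0[q_4-1]\dots}$ with symbol string $(\delta_n)$ ($\delta_n=0$ for odd $n$, $\delta_n=q_n-1$ for even $n$), one has formally $a_0-x=\sum_n\frac{(-1)^n(\delta_n-\varepsilon_n)}{q_1\cdots q_n}$. When the odd symbols of $x$ vanish, each $\delta_n-\varepsilon_n$ is an admissible symbol, the subtraction is borrow-free, and applying $f$ term by term gives $f(a_0-x)=\sum_n\frac{\delta_n-\varepsilon_n}{(-q)^n}=f(a_0)-f(x)$ at once. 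The difficulty is exactly the borrows: at an odd slot with $\varepsilon_n>0$ the quantity $\delta_n-\varepsilon_n$ is negative, so $a_0-x$ must first be rewritten in its genuine nega-$Q$ form, and here one must invoke the two-representation identity for nega-$Q$-rational points to propagate the borrow through the string before $f$ may be applied digitwise. Controlling these borrows — equivalently, checking the claimed identity against the borrow-free complement $\varepsilon_n\mapsto q_n-1-\varepsilon_n$, which sends $x$ to $2a_0-1-x$ without ever borrowing — is the crux of the argument; once the bookkeeping is settled the telescope closes the computation, and the bound follows from $f(a_0)=\sum_k\frac{q_{2k}-1}{q^{2k}}\le\frac{1}{q+1}<1$.
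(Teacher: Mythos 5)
Your treatment of properties (1), (2) and (4) coincides with the paper's (the paper dispatches (1) and (2) in one line each and proves (4) by exactly your shift-on-symbol-strings computation), so the only item needing discussion is property (3), where you correctly sense a difficulty but then do not resolve it: ``once the bookkeeping is settled the telescope closes the computation'' is a placeholder, and the borrow analysis you defer is precisely the content of the claim. As written, this is a genuine gap.

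The paper closes it --- or rather sidesteps it --- by never attempting to put $a_0-x$ into nega-$Q$ form at all. Splitting $x$ into its odd- and even-indexed terms against $a_0=\sum_k\frac{q_{2k}-1}{q_1\cdots q_{2k}}$ gives
$$
a_0-x=\sum^{\infty}_{k=1}{\frac{\varepsilon_{2k-1}}{q_1q_2\cdots q_{2k-1}}}+\sum^{\infty}_{k=1}{\frac{q_{2k}-1-\varepsilon_{2k}}{q_1q_2\cdots q_{2k}}},
$$
which is a \emph{positive} Cantor series with admissible digits $\varepsilon_{2k-1}$ and $q_{2k}-1-\varepsilon_{2k}$: the subtraction occurs only in the even slots, where $a_0$ carries the maximal digit $q_{2k}-1$, so no borrow ever arises. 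Applying the recoding $q_1\cdots q_n\mapsto q^n$ termwise to this series and adding $f(x)=\sum_k\varepsilon_k(-q)^{-k}$ cancels the odd-indexed terms and leaves $\sum_k\frac{q_{2k}-1}{q^{2k}}=f(a_0)$; the whole of (3) is then a three-line computation rather than a delicate carrying argument. You should be aware, though, that this step tacitly extends $f$ to a digit-recoding map on positive Cantor strings: since $a_0<1$, the point $a_0-x$ lies in $[0,1]$ and falls outside $D(f)=[a_0-1,a_0]$ whenever $x<0$, so the literal expression $f(a_0-x)$ is undefined on much of the domain. Your instinct to read $f$ purely as a recoding of symbol strings is the only reading under which (3) makes sense, but you must then specify \emph{which} representation of $a_0-x$ is being recoded --- the borrow-free positive one above is the answer. (Incidentally, your sharper estimate $f(a_0)\le\frac{1}{q+1}$ is correct, and it shows that the paper's parenthetical claim that $f(a_0)=1$ when $q_n=q$ for all $n$ cannot hold.)
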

\begin{proof}
\emph{The first property} follows from the definition of $f$. 

\emph{The second property} follows from the definition of $f$ and Theorem \ref{th: 3} (the next theorem).

Let us prove \emph{the third property}. Since
$$
a_0-x=\sum^{\infty} _{k=1}{\frac{q_{2k}-1-\varepsilon_{2k}}{q_1q_2\cdots q_{2k}}}+\sum^{\infty} _{k=1}{\frac{\varepsilon_{2k-1}}{q_1q_2\cdots q_{2k-1}}},
$$
we have
$$
f(a_0-x)=\sum^{\infty} _{k=1}{\frac{q_{2k}-1-\varepsilon_{2k}}{q^{2k}}}+\sum^{\infty} _{k=1}{\frac{\varepsilon_{2k-1}}{q^{2k-1}}}.
$$
Whence,
$$
f(x)+f(a_0-x)=\sum^{\infty} _{k=1}{\frac{\varepsilon_k}{(-q)^k}}+\sum^{\infty} _{k=1}{\frac{q_{2k}-1-\varepsilon_{2k}}{q^{2k}}}+\sum^{\infty} _{k=1}{\frac{\varepsilon_{2k-1}}{q^{2k-1}}}=\sum^{\infty} _{k=1}{\frac{q_{2k}-1}{q^{2k}}}=f(a_0)\le 1.
$$
Note that the last inequality is an equality whenever $y=x$, i.e., when the condition $q_n=q$ holds for all positive integers $n$.

Let us prove \emph{the fourth property}. We have
$$
f\left(\sigma^k(x)\right)=f\left(\sum^{\infty} _{j=k+1}{\frac{(-1)^{j-k}\varepsilon_{j}}{q_{k+1}q_{k+2}\cdots q_{j}}}\right)=\sum^{\infty} _{j=k+1}{\frac{\varepsilon_{j}}{(-q)^{j-k}}}=\sigma^k\left(\sum^{\infty} _{n=1}{\frac{\varepsilon_{n}}{(-q)^n}}\right)=\sigma^k\left(f(x)\right).
$$
\end{proof}

\begin{theorem} The following properties are true:
\label{th: 3}
\begin{itemize}
\item The function $f$ is continuous at nega-$Q$-irrational points from $[a_0-1,a_0]$.

\item The function $f$ is continuous at all nega-$Q$-rational points from $[a_0-1,a_0]$ if the condition $q_n=q$ holds for all positive integers $n$.

\item If there exist positive integers $n$ such that $q_n<q$, then points of the form
$$
\Delta^{-Q} _{\varepsilon_1\ldots\varepsilon_{m-1}\varepsilon_m[q_{m+1}-1]0[q_{m+3}-1]0[q_{m+5}-1]\ldots}=\Delta^Q _{\varepsilon_1\ldots\varepsilon_{m-1}[\varepsilon_m-1]0[q_{m+2}-1]0[q_{m+4}-1]0[q_{m+6}-1]\ldots},
$$
where $m<n$, are points of discontinuity of the function.
\end{itemize}
\end{theorem}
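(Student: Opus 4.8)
The plan is to read everything off the cylinder structure of the two numeral systems, using the single fact that $f$ carries the nega-$Q$-cylinder $\Delta^{-Q}_{c_1\ldots c_m}$ into the nega-$q$-cylinder $\Delta^{-q}_{c_1\ldots c_m}$ with the same base, and that the latter is a similar copy of $\left[-\frac{q}{q+1},\frac{1}{q+1}\right]$ of diameter $q^{-m}\to 0$.

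First I would treat a nega-$Q$-irrational $x_0=\Delta^{-Q}_{\varepsilon_1\varepsilon_2\ldots}$. Since every cylinder endpoint is nega-$Q$-rational, $x_0$ is never an endpoint, so it lies in the interior of $\Delta^{-Q}_{\varepsilon_1\ldots\varepsilon_m}$ for each $m$; choose $\delta_m>0$ with $(x_0-\delta_m,x_0+\delta_m)\subset\Delta^{-Q}_{\varepsilon_1\ldots\varepsilon_m}$. Any $x$ in this neighbourhood has every representation beginning with $\varepsilon_1\ldots\varepsilon_m$ (for an interior point both of its representations do), whence $f(x)\in\Delta^{-q}_{\varepsilon_1\ldots\varepsilon_m}\ni f(x_0)$ and so $|f(x)-f(x_0)|\le q^{-m}$. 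Given $\epsilon>0$, picking $m$ with $q^{-m}<\epsilon$ and $\delta=\delta_m$ yields continuity. This is the easy half.

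Next, at a nega-$Q$-rational
\[
x_0=\Delta^{-Q}_{\varepsilon_1\ldots\varepsilon_{m-1}\varepsilon_m[q_{m+1}-1]0[q_{m+3}-1]0\ldots}=\Delta^{-Q}_{\varepsilon_1\ldots\varepsilon_{m-1}[\varepsilon_m-1]0[q_{m+2}-1]0\ldots},
\]
the point is the common endpoint of the adjacent rank-$m$ cylinders $\Delta^{-Q}_{\varepsilon_1\ldots\varepsilon_m}$ and $\Delta^{-Q}_{\varepsilon_1\ldots\varepsilon_{m-1}[\varepsilon_m-1]}$. If $x\to x_0$ from within the first cylinder, then for any prescribed length $M$ the approaching points eventually share with the \emph{first} representation of $x_0$ a prefix of rank $M$, so $f(x)$ shares the same nega-$q$ prefix and $|f(x)-A|\le q^{-M}$, where $A:=\Delta^{-q}_{\varepsilon_1\ldots\varepsilon_m[q_{m+1}-1]0[q_{m+3}-1]0\ldots}$; hence the one-sided limit is $A$. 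Symmetrically the limit from within the second cylinder is $B:=\Delta^{-q}_{\varepsilon_1\ldots\varepsilon_{m-1}[\varepsilon_m-1]0[q_{m+2}-1]0\ldots}$. Continuity at $x_0$ is therefore equivalent to $A=B$, and the whole matter reduces to the jump $A-B$.

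Finally, collecting the two expansions after the common prefix $\varepsilon_1\ldots\varepsilon_{m-1}$ and using $(-q)^{-i}=(-1)^iq^{-i}$, a direct calculation gives
\[
A-B=\frac{1}{(-q)^m}\left(1-\sum_{i=1}^{\infty}\frac{q_{m+i}-1}{q^{i}}\right).
\]
Because $\sum_{i\ge 1}(q-1)/q^{i}=1$, the bracket vanishes exactly when $q_{m+i}=q$ for all $i>0$ and is strictly positive the moment some $q_{m+i}<q$. Thus if $q_n=q$ for all $n$ then $A=B$ and both representations of $x_0$ map to the same nega-$q$-rational number, so $f$ is well defined and continuous there (in this case $f$ is simply the identity), proving the second assertion; whereas if some $q_n<q$, then for any $m<n$ the bracket contains the term $(q_n-1)/q^{\,n-m}<(q-1)/q^{\,n-m}$, forcing $A\ne B$, so the two one-sided limits disagree and $x_0$ is a jump discontinuity, proving the third assertion. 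I expect the one genuinely delicate step to be this last computation: one must keep the alternating tails aligned so that the geometric identity $\sum(q-1)/q^{i}=1$ surfaces and exposes the defect $1-\sum_{i\ge1}(q_{m+i}-1)/q^{i}$ as the exact obstruction to continuity.
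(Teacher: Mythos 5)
Your proposal is correct and takes essentially the same route as the paper: continuity at nega-$Q$-irrational points via the shrinking common-cylinder bound $q^{-m}$, and at nega-$Q$-rational points a comparison of the two one-sided limits, whose difference $\frac{1}{(-q)^m}\bigl(1-\sum_{i\ge 1}(q_{m+i}-1)/q^{i}\bigr)$ is exactly the jump $\Delta_f$ the paper writes down (up to the sign $(-1)^m$, which the paper suppresses since only nonvanishing matters). Your derivation of that jump is, if anything, slightly more explicit than the paper's.
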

\begin{proof} Since for any $x\in [a_0-1,a_0]$ the equality 
$$
x=\Delta^{-Q} _{c_1c_2...c_m...}=\bigcap^{\infty} _{m=1} {\Delta^{-Q} _{c_1c_2...c_m}},
$$
is true (see~\cite{S. Serbenyuk alternating Cantor series 2013}), where $\Delta^{-Q} _{c_1c_2...c_m}$ is a nega-$Q$-cylinder,
let us consider   $x,x_0\in \Delta^{-Q} _{c_1c_2...c_m}$. Here $x$ is an arbitrary number, $x_0$ is a nega-$Q$-irrational number. Then
$$
\left|f (x) - f (x_0)\right|=\left|\sum^{\infty} _{k=m+1} {\frac{\varepsilon_k (f(x))-\varepsilon_k (f(x_0))}{(-q)^k}}\right|\le \frac{1}{q^m}\left|\sum^{\infty} _{k=m+1} {\frac{q_k-1}{q^{k-m}}}\right|
$$
$$
\le\sum^{\infty} _{k=m+1} {\frac{q-1}{q^{k}}}=\frac{1}{q^{m}}\to 0 ~\mbox{as} ~m \to \infty.
$$
So,
$$
\lim_{x\to x_0}{f(x)}=f(x_0).
$$
That is, the function $f$  is continuous at nega-$Q$-irrational points. 

If $x_0=\Delta^{-Q} _{\varepsilon_1\varepsilon_2...\varepsilon_n...}$ is  a nega-$Q$-rational point, then
$$
x_0=x^{(1)} _0=\begin{cases}
\Delta^{-Q} _{\varepsilon_1\varepsilon_2...\varepsilon_{n-1}\varepsilon_n [q_{n+1}-1]0[q_{n+3}-1]0[q_{n+5}-1]...}&\text{if $n$ is even}\\
\Delta^{-Q} _{\varepsilon_1\varepsilon_2...\varepsilon_{n-1}[\varepsilon_n-1]0 [q_{n+2}-1]0[q_{n+4}-1]0[q_{n+6}-1]...}&\text{if $n$ is odd}
\end{cases}
$$
$$
=\begin{cases}
\Delta^{-Q} _{\varepsilon_1\varepsilon_2...\varepsilon_{n-1}\varepsilon_n [q_{n+1}-1]0[q_{n+3}-1]0[q_{n+5}-1]...}&\text{if $n$ is odd}\\
\Delta^{-Q} _{\varepsilon_1\varepsilon_2...\varepsilon_{n-1}[\varepsilon_n-1]0 [q_{n+2}-1]0[q_{n+4}-1]0[q_{n+6}-1]...}&\text{if $n$ is even}
\end{cases}=x^{(2)} _0.
$$
Using the technique  for the case of nega-$Q$-irrational points, we obtain the folllowing for nega-$Q$-rational points:
$$
\lim_{x\to x_0+0}{f(x)}=f(x^{(1)} _0)
~~~\text{and}~~~
\lim_{x\to x_0-0}{f(x)}=f(x^{(2)} _0).
$$
 Hence
$$
\Delta_f=\lim_{x \to x_0+0} {f}(x)-\lim_{x \to x_0-0} {f (x)}=\frac{1}{q^n}-\frac{1}{q^n}\sum^{\infty} _{k=1}{\frac{q_{k+n}-1}{q^k}}\ne 0
$$
whenever there exists at least one $q_{k+n}< q$.
\end{proof}

\begin{corollary}
The set of all points of discontinuity of the function $f$ is:
\begin{itemize}
\item the empty set whenever $q_n=q$ for all $n\in\mathbb N$.
\item a finite set whenever $q_n\ne q$ for a finite number of $n$.
\item an infinite set whenever there exists an infinite subsequence $(n_k)$ of positive integers such that $q_{n_k}\ne q$.
\end{itemize}
\end{corollary}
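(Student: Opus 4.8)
The plan is to derive the classification straight from Theorem~\ref{th: 3}, so that the only substantive work is bookkeeping on the indices $n$ with $q_n<q$. First I would record the two ingredients the theorem supplies: $f$ is continuous at every nega-$Q$-irrational point, and at a nega-$Q$-rational point whose two representations first diverge at rank $n$ the jump equals
$$
\Delta_f=\frac{1}{q^n}-\frac{1}{q^n}\sum_{k=1}^{\infty}\frac{q_{k+n}-1}{q^k}.
$$
Because $q_{k+n}\le q$ for every $k$ and $\sum_{k=1}^{\infty}\frac{q-1}{q^k}=1$, this quantity is nonnegative and vanishes exactly when $q_{k+n}=q$ for all $k\ge 1$. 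Consequently a rank-$n$ nega-$Q$-rational point is a point of discontinuity of $f$ if and only if there is an index $j>n$ with $q_j<q$. This reduces the entire corollary to an analysis of the set $S=\{n\in\mathbb N:q_n<q\}$, since by the theorem discontinuities can occur only at nega-$Q$-rational points.

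Next I would split into the three cases governed by $S$. If $S=\emptyset$, i.e.\ $q_n=q$ for all $n$, the jump criterion fails at every rank, so together with continuity at the irrational points the discontinuity set is empty. If $S$ is finite and nonempty, set $N=\max S$; then an index $j>n$ with $q_j<q$ exists precisely for $n<N$, so the discontinuities are exactly the nega-$Q$-rational points of rank at most $N-1$ (in particular none when $N=1$). For each fixed rank $n$ the number of such points is finite---it is at most $q_1q_2\cdots q_n$, since the point is determined by its digits $\varepsilon_1,\dots,\varepsilon_n$---and a finite union of finite sets is finite, so the discontinuity set is finite. If instead $S$ is infinite, then for every $n$ there is some $j>n$ with $q_j<q$, whence every nega-$Q$-rational point is a point of discontinuity; as these points are countably infinite in number, the set is infinite.

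The argument is almost entirely mechanical once the jump formula of Theorem~\ref{th: 3} is in hand, and the three cardinality statements then follow by the counting just described. The one place I would treat carefully is the notion of \emph{rank} of a nega-$Q$-rational point: each such point has two representations, and I would fix $n$ to be the position at which they first differ, so that the criterion ``$\exists\,j>n:q_j<q$'' is unambiguous and the enumeration of points of a given rank is well defined. This is the main (and rather mild) obstacle; once the rank is pinned down, the equivalence with the jump being nonzero and the empty/finite/infinite dichotomy for $S$ are immediate.
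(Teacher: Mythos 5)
Your proposal is correct and follows the same route the paper intends: the corollary is stated without proof as an immediate consequence of Theorem~\ref{th: 3}, and your argument simply makes explicit the jump criterion $\Delta_f=0\iff q_j=q$ for all $j>n$ and the resulting empty/finite/infinite trichotomy for the set $\{n:q_n<q\}$. Your care with the notion of rank and the finiteness count $q_1q_2\cdots q_n$ per rank is exactly the bookkeeping the paper leaves to the reader.
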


\begin{remark}
To reach that the function $f$ be well-defined on the set of nega-$Q$-rational numbers from $[a_0-1,a_0]$, we shall not consider the  representation 
$$
\Delta^{-Q} _{\varepsilon_1\varepsilon_2... \varepsilon_{n-1}[\varepsilon_{n}-1]0[q_{n+2}-1]0[q_{n+4}-1]0[q_{n+6}-1]...}.
$$
\end{remark}

\begin{lemma}
The function $f$ is a  strictly increasing function on the domain.
\end{lemma}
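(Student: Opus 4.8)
The plan is to compare two distinct points $x<x'$ of the domain through their canonical nega-$Q$-representations and to show that the sign of $x'-x$ is governed by a rule depending only on the digit strings and on the parity of the first place of disagreement — a rule shared verbatim by the nega-$q$-system into which $f$ sends them. First I would fix canonical representations (as prescribed in the Remark preceding the statement, discarding those ending in $[\varepsilon_n-1]0[q_{n+2}-1]0\dots$), so that distinct reals receive distinct digit strings. Let $m$ be the largest index with $\varepsilon_i=\varepsilon'_i$ for all $i\le m$, and set $\delta=\varepsilon'_{m+1}-\varepsilon_{m+1}\ne 0$. Using \eqref{eq: Cantor series 3} together with the one-step form of the shift recursion read off from \eqref{eq: Cantor series 2}, namely $\sigma^m(x)=-\varepsilon_{m+1}/q_{m+1}-q_{m+1}^{-1}\sigma^{m+1}(x)$, and the fact that the first $m$ digits coincide, I would reduce the difference to
\[
x'-x=\frac{(-1)^{m+1}}{q_1q_2\cdots q_{m+1}}\bigl(\delta+D\bigr),\qquad D:=\sigma^{m+1}(x')-\sigma^{m+1}(x).
\]

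The second step is the sign computation. Since $\sigma^{m+1}(x)$ and $\sigma^{m+1}(x')$ both lie in a nega-$Q$-range that is an interval of length $1$ (this is the content of the first property of the preceding Lemma, applied to the shifted sequence $(q_{m+2},q_{m+3},\dots)$), we have $|D|\le 1$. Because $\delta$ is a nonzero integer, $\delta+D$ cannot change the sign of $\delta$ unless it vanishes, and $\delta+D=0$ would force $x=x'$. Hence for $x\ne x'$ one obtains $\operatorname{sign}(x'-x)=(-1)^{m+1}\operatorname{sign}(\delta)$, a quantity depending only on $m$ and on which of $\varepsilon_{m+1},\varepsilon'_{m+1}$ is larger, and not on the actual values $q_1,\dots,q_{m+1}$ nor on the tails.

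Third, I would run the identical computation for the images. Since $f$ preserves the digit string, $f(x)$ and $f(x')$ are nega-$q$-expansions that again agree up to place $m$ and first differ at place $m+1$ with the \emph{same} $\delta$; the nega-$q$ analogue of the reduction gives
\[
f(x')-f(x)=\frac{(-1)^{m+1}}{q^{m+1}}\bigl(\delta+D'\bigr),\qquad D':=\sigma^{m+1}(f(x'))-\sigma^{m+1}(f(x)),
\]
with $|D'|\le 1$ because the nega-$q$-range $[-q/(q+1),1/(q+1)]$ also has length $1$. Comparing the two displayed expressions, $x'-x$ and $f(x')-f(x)$ carry the same sign $(-1)^{m+1}\operatorname{sign}(\delta)$, provided $\delta+D'\ne 0$; so $x<x'$ yields $f(x)<f(x')$.

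The main obstacle is precisely this last proviso: ruling out the degenerate collapse $\delta+D'=0$, which can occur only when $\delta=\pm1$ and $D'=\mp1$, i.e. when one image-tail sits at the top $1/(q+1)$ and the other at the bottom $-q/(q+1)$ of the nega-$q$-range. I would show, using the hypothesis $q_n\le q$, that attaining both extremal nega-$q$-tails forces the digits at all positions $\ge m+2$ to be the extremal words $0[q-1]0[q-1]\dots$ and $[q-1]0[q-1]0\dots$, which requires $q_n=q$ for every $n\ge m+2$; but then the nega-$Q$ and nega-$q$ tails coincide, whence $D=D'=\mp1$ and $\delta+D=0$, i.e. $x=x'$, contradicting $x\ne x'$. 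Thus the collapse cannot happen for distinct $x,x'$, the two signs agree, and $f$ is strictly increasing on its domain.
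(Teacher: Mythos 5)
Your proposal is correct, and it takes a genuinely different --- and in fact more careful --- route than the paper. The paper also locates the first differing digit $n_0$ and isolates the leading term, but then proves strict positivity of $f(x_2)-f(x_1)$ by a direct series estimate: it bounds the leading term below by $q^{-n_0}$ and subtracts only the tail contributions from positions of one parity, arriving at the quantitative bound $\frac{1}{(q+1)q^{n_0}}>0$. As written, that estimate silently assumes the tail terms of the other parity are nonnegative; accounting for all positions $j>n_0$, the worst-case tail is $-\sum_{j>n_0}(q_j-1)q^{-j}\ge -q^{-n_0}$, the bound degenerates to $\ge 0$, and indeed no uniform positive lower bound depending only on $n_0$ exists. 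Your argument confronts exactly this degenerate case: you reduce both $x'-x$ and $f(x')-f(x)$ to the common form $(-1)^{m+1}(\delta+D)/(\text{positive factor})$ with $|D|\le 1$ via the shift identity \eqref{eq: Cantor series 3}, note that the resulting sign rule is the same in the nega-$Q$ and nega-$q$ systems, and then rule out the collapse $\delta+D'=0$ by showing that attaining both extremal nega-$q$ tails forces $q_n=q$ beyond the splitting position, makes the two tails coincide with the nega-$Q$ ones, and hence forces $x=x'$ (the two strings being the two representations of one nega-$Q$-rational). So your proof buys completeness precisely where the paper's inequality is not a valid lower bound, while the paper's computation, where it does apply (e.g.\ when the later digits of $x_1$ and $x_2$ do not interleave adversarially), yields an explicit modulus of increase that your sign argument does not.
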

\begin{proof}
Suppose $x_1=\Delta^{-Q} _{\alpha_1\alpha_2...\alpha_n...}$ and $x_2=\Delta^{-Q} _{\varepsilon_1\varepsilon_2...\varepsilon_n...}$ such that $x_1<x_2$. Then there exists $n_0$ such that $\alpha_i=\varepsilon_i$ for $i=\overline{1,n_0-1}$ and 
$$
\begin{cases}
\alpha_{n_0}<\varepsilon_{n_0} &\text{if $n_0$ is even}\\
\alpha_{n_0}>\varepsilon_{n_0} &\text{if $n_0$ is odd.}
\end{cases}
$$
Whence,
$$
f(x_2)-f(x_1)=\frac{\varepsilon_{n_0}-\alpha_{n_0}}{(-q)^{n_0}}+\sum^{\infty} _{j=n_0+1}{\frac{\varepsilon_j-\alpha_j}{(-q)^j}}.
$$

If $n_0$ is even, then 
$$
f(x_2)-f(x_1)\ge\frac{\varepsilon_{n_0}-\alpha_{n_0}}{q^{n_0}}-\sum^{\infty} _{k=1}{\frac{q_{n_0+2k-1}-1}{q^{n_0+2k-1}}}\ge \frac{1}{q^{n_0}}-\frac{1}{q^{n_0}}\sum^{\infty} _{k=1}{\frac{q_{n_0+2k-1}-1}{q^{2k-1}}}
$$
$$
\ge \frac{1}{q^{n_0}}-\frac{1}{q^{n_0}}\sum^{\infty} _{k=1}{\frac{q-1}{q^{2k-1}}}=\frac{1}{(q+1)q^{n_0}} >0, 
$$
since $n_0$ is even,  $\varepsilon_{n_0}-\alpha_{n_0}\ge 1$, and $q_n\le q$.

If $n_0$ is odd, then 
$$
f(x_2)-f(x_1)\ge-\frac{\varepsilon_{n_0}-\alpha_{n_0}}{q^{n_0}}-\sum^{\infty} _{k=1}{\frac{q_{n_0+2k}-1}{q^{n_0+2k}}}\ge \frac{\alpha_{n_0}-\varepsilon_{n_0}}{q^{n_0}}-\frac{1}{q^{n_0}}\sum^{\infty} _{k=1}{\frac{q_{n_0+2k}-1}{q^{2k}}}
$$
$$
\ge \frac{1}{q^{n_0}}-\frac{1}{q^{n_0}}\sum^{\infty} _{k=1}{\frac{q-1}{q^{2k}}}=\frac{1}{(q+1)q^{n_0-1}} >0, 
$$
since $n_0$ is odd,  $\alpha_{n_0}-\varepsilon_{n_0}\ge 1$, and $q_n\le q$.
\end{proof}

\begin{theorem} For the function $f$, the following statements are true:
\begin{itemize}
\item  If the condition $q_n=q$ holds for all positive integers $n$, then $f^{'} (x_0)=1$;
\item  If there exists an infinite sequence $(n_k)$ of positive integers such that $q_{n_k}<q$, then $f$ is a singular function;
\item  If there exists a finite sequence $(n_k)$ of positive integers such that $q_{n_k}<q$, then $f$ is a non-differentiable function.
\end{itemize}
\end{theorem}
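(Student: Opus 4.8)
The plan is to base all three parts on a single computation: the ratio of the length of an image cylinder to the length of the corresponding domain cylinder. First I would record that the whole domain $[a_0-1,a_0]$ and the whole range $\left[-\frac{q}{q+1},\frac{1}{q+1}\right]$ each have length $1$, and that by \eqref{eq: Cantor series 3} a rank-$m$ nega-$Q$-cylinder $\Delta^{-Q}_{c_1\ldots c_m}$ is an affine image of the full domain with scaling factor $1/(q_1\cdots q_m)$, while its $f$-image $\Delta^{-q}_{c_1\ldots c_m}$ is a nega-$q$-cylinder of length $1/q^m$. Since $f$ preserves digit strings and is strictly increasing (the monotonicity established above), it carries the domain cylinder onto the image cylinder, so the increment of $f$ across the cylinder equals $1/q^m$. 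Hence the cylinder ratio is
$$r_m=\frac{1/q^m}{1/(q_1\cdots q_m)}=\prod_{k=1}^{m}\frac{q_k}{q},$$
which, crucially, does not depend on the chosen cylinder.

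The second step is a bracketing lemma linking $r_m$ to the derivative. Fix a nega-$Q$-irrational point $x_0$; it is interior to every cylinder containing it, so the endpoints $a_m<x_0<b_m$ of its rank-$m$ cylinder satisfy $b_m-a_m=1/(q_1\cdots q_m)\to0$ and $f(b_m)-f(a_m)=1/q^m$. If $f'(x_0)$ exists and is finite, then with $\lambda_m=(b_m-x_0)/(b_m-a_m)\in(0,1)$ the identity
$$r_m=\frac{f(b_m)-f(a_m)}{b_m-a_m}=\lambda_m\frac{f(b_m)-f(x_0)}{b_m-x_0}+(1-\lambda_m)\frac{f(x_0)-f(a_m)}{x_0-a_m}$$
exhibits $r_m$ as a convex combination of two difference quotients, both tending to $f'(x_0)$; therefore $f'(x_0)=\lim_m r_m$. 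Thus wherever a finite derivative exists it must equal $\prod_{k=1}^{\infty}q_k/q$.

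Now I would dispatch the three cases. If $q_n=q$ for all $n$, the two numeral systems coincide, so $f$ is the identity and $f'\equiv1$ (equivalently $r_m\equiv1$). If $q_{n_k}<q$ for infinitely many $n_k$, then each such factor is $\le(q-1)/q<1$, so $r_m\to0$; since $f$ is monotone it is differentiable almost everywhere by Lebesgue's theorem, and the nega-$Q$-rational points form a countable (hence null) set, so at almost every $x_0$ the derivative exists, $x_0$ is nega-$Q$-irrational, and the bracketing lemma gives $f'(x_0)=0$. Combined with strict monotonicity this is exactly singularity. If $q_{n_k}<q$ for only finitely many $n_k$, then $r_m$ stabilises at a constant $c=\prod_k (q_{n_k}/q)\in(0,1)$; here I invoke Theorem~\ref{th: 3}, which guarantees that $f$ has (finitely many) genuine jump discontinuities, and since a function cannot be differentiable at a point of discontinuity, $f$ is non-differentiable.

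The main obstacle is the bracketing lemma of the second paragraph: one must be sure that the \emph{existing} derivative is forced to equal the cylinder-ratio limit even though $x_0$ may sit very asymmetrically inside its cylinders. The convex-combination identity is precisely what removes any dependence on the position of $x_0$ within $[a_m,b_m]$, so no regularity of the cylinder nesting is needed. The remaining delicate point is the bookkeeping in the second case: one must verify that almost-everywhere differentiability of the monotone $f$, together with the nullity of the nega-$Q$-rational set, indeed places almost every point in the regime where the lemma applies, so that $f'=0$ almost everywhere and the conclusion ``singular'' follows.
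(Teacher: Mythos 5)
Your strategy is essentially the paper's: compare the increment of $f$ over a rank-$m$ cylinder with the cylinder's length and pass the limit of that ratio to the derivative. But your central computation contains a genuine error. The map $f$ does \emph{not} carry the domain cylinder $\Delta^{-Q}_{c_1\ldots c_m}$ \emph{onto} the nega-$q$-cylinder $\Delta^{-q}_{c_1\ldots c_m}$, only into it: the digits beyond position $m$ remain confined to $\{0,1,\ldots,q_{m+k}-1\}$, a proper subset of $\{0,1,\ldots,q-1\}$ whenever $q_{m+k}<q$ (this is exactly why the range $E(f)$ excludes the set $C_f$ in the paper's first lemma). Hence the increment over the cylinder is not $1/q^m$ but
$$
f\bigl(\sup\Delta^{-Q}_{c_1\ldots c_m}\bigr)-f\bigl(\inf\Delta^{-Q}_{c_1\ldots c_m}\bigr)=\sum_{k=1}^{\infty}\frac{q_{m+k}-1}{q^{m+k}}\le\frac{1}{q^m},
$$
and the true ratio is $r_m=\frac{q_1\cdots q_m}{q^m}\sum_{k\ge 1}\frac{q_{m+k}-1}{q^{k}}$, which is only squeezed between $\frac{1}{q-1}\prod_{k=1}^{m}\frac{q_k}{q}$ and $\prod_{k=1}^{m}\frac{q_k}{q}$ rather than equal to the product. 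The error happens not to damage your conclusions --- in case 1 the two expansions coincide, in case 2 your overstated $r_m$ is an upper bound for the true one so $r_m\to 0$ still follows, and in case 3 you never use $r_m$ --- but the step as written is false and must be replaced by this two-sided estimate. On the positive side, your bracketing lemma (the convex combination of one-sided difference quotients at a point interior to its cylinders) and the explicit appeal to Lebesgue's theorem for monotone functions are correct and supply exactly the details the paper's proof omits.

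For the third bullet you take a genuinely different route from the paper, which merely records the two-sided bound on $\lim_m r_m$ and stops --- a bound that by itself proves nothing about non-differentiability. Your idea, that a function cannot be differentiable at a jump and Theorem~\ref{th: 3} supplies jumps, is the right mechanism, and indeed the only available one: once $q_n=q$ for all $n>N$, the function is affine on every rank-$N$ cylinder, so away from the finitely many jump points $f$ is differentiable. But note the hypothesis you are importing: Theorem~\ref{th: 3} produces discontinuities only at rational points of rank $m<n$ for some $n$ with $q_n<q$, so you need at least one such $n\ge 2$. If the only index with $q_n<q$ is $n=1$ (e.g.\ $q_1=2$ and $q_n=q=3$ for $n\ge 2$), there is no admissible $m$, the two affine pieces $f(x)=\frac{q_1}{q}x$ on the rank-$1$ cylinders glue continuously, and $f$ is differentiable everywhere --- so the third claim fails as stated. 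Your argument is therefore sound only under the additional assumption that some $q_n<q$ occurs at an index $n\ge 2$, a caveat that applies equally to the theorem itself.
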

\begin{proof}
Since~(see~\cite{S. Serbenyuk alternating Cantor series 2013})
for any nega-$Q$-cylinder $\Delta^{-Q} _{c_1c_2...c_m}$,  properties 
$$
\bigcap^{\infty} _{m=1} {\Delta^{-Q} _{c_1c_2...c_m}}=\Delta^{-Q} _{c_1c_2...c_m...}=x\in[a_0-1,a_0],
$$
$$
\Delta^{-Q} _{c_1c_2...c_m}=\begin{cases}
\left[\Delta^{-Q} _{c_1c_2...c_m[q_{m+1}-1]0[q_{m+3}-1]0[q_{m+5}-1]...}, \Delta^{-Q} _{c_1c_2...c_m0[q_{m+2}-1]0[q_{m+4}-1]0[q_{m+6}-1]...}\right]&\text{if $m$ is even}\\
\left[\Delta^{-Q} _{c_1c_2...c_m0[q_{m+2}-1]0[q_{m+4}-1]0[q_{m+6}-1]...},\Delta^{-Q} _{c_1c_2...c_m[q_{m+1}-1]0[q_{m+3}-1]0[q_{m+5}-1]...}\right]&\text{if $m$ is odd}
\end{cases},
$$
and
$$
\left|\Delta^{-Q} _{c_1c_2...c_m}\right|=\frac{1}{q_1q_2\cdots q_m}
$$
hold, we obtain 
$$
\mu_{f}\left(\Delta^{-Q} _{c_1c_2...c_n}\right)=f\left(\sup \Delta^{-Q} _{c_1c_2...c_n}\right)-f\left(\inf \Delta^{-Q} _{c_1c_2...c_n}\right)=\sum^{\infty} _{k=1}{\frac{q_{m+k}-1}{q^{m+k}}}
$$
and
$$
f^{'} (x_0)=\lim_{n\to\infty}{\frac{\mu_{f}{\left(\Delta^{-Q} _{c_1c_2...c_m}\right)}}{\left|\Delta^{-Q} _{c_1c_2...c_m}\right|}}=\lim_{m\to\infty}{\left(\frac{q_1q_2\cdots q_m }{q^m}\sum^{\infty} _{n=m+1}{\frac{q_n-1}{q^{n-m}}}\right)}
$$
  for $x_0\in\Delta^{-Q} _{c_1c_2...c_m}$.
Also, since $2\le q_m\le q$ holds for all positive integers $m$, we have
$$
\frac{1}{q-1}\lim_{m\to\infty}{\left(\frac{q_1q_2\cdots q_m }{q^m}\right)}\le\lim_{m\to\infty}{\left(\frac{q_1q_2\cdots q_m }{q^m}\sum^{\infty} _{n=m+1}{\frac{q_j-1}{q^{n-m}}}\right)}\le\lim_{m\to\infty}{\left(\frac{q_1q_2\cdots q_m }{q^m}\right)}.
$$
This completes the proof.
\end{proof}

Let us consider the following  infinite system of functional equations 
\begin{equation}
\label{def: function eq. system}
f\left(\sigma^{k-1}(x)\right)=-\frac{\varepsilon_k}{q}-\frac{1}{q}f\left(\sigma^{k}(x)\right),
\end{equation}
where $k=1, 2, \dots$, $\sigma$ is the shift operator of the nega-$Q$-expansion (here $\sigma^0 (x)=x$), and $x=\Delta^{-Q} _{\varepsilon_1\varepsilon_2...\varepsilon_n...}$. 

\begin{lemma} 
The function $f$ is the unique solution of  infinite system \eqref{def: function eq. system}  of functional equations in the class of determined and bounded on $[a_0-1,a_0]$ functions.
\end{lemma}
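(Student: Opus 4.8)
The plan is to argue in two stages: first that $f$ genuinely solves the system \eqref{def: function eq. system}, and then that any determined, bounded solution must coincide with $f$. For the existence part I would substitute $x=\Delta^{-Q}_{\varepsilon_1\varepsilon_2\ldots}$ into \eqref{def: function eq. system} and verify it directly. By the definition of $f$ together with the fourth property of the first lemma one has
$$
f\left(\sigma^{k-1}(x)\right)=\sum^{\infty}_{j=k}{\frac{\varepsilon_j}{(-q)^{j-k+1}}}.
$$
Splitting off the term $j=k$, which equals $-\varepsilon_k/q$, and factoring $-1/q$ out of the remaining tail, one recognizes the tail as $-\frac{1}{q}f(\sigma^k(x))$, which is exactly \eqref{def: function eq. system}. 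Thus $f$ is a solution, and it is determined (single-valued) and bounded by the first lemma and the convention fixed in the preceding remark.

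For uniqueness, let $g$ be any determined and bounded solution of \eqref{def: function eq. system} on $[a_0-1,a_0]$. The key step is to telescope the system. By induction on $n$, applying \eqref{def: function eq. system} successively for $k=1,2,\dots,n$, I would establish
$$
g(x)=\sum^{n}_{k=1}{\frac{\varepsilon_k}{(-q)^k}}+\frac{1}{(-q)^n}\,g\left(\sigma^n(x)\right).
$$
The base case $n=1$ is the equation itself, and the inductive step simply replaces $g(\sigma^n(x))$ using \eqref{def: function eq. system} with $k=n+1$, namely $g(\sigma^n(x))=-\varepsilon_{n+1}/q-\frac{1}{q}g(\sigma^{n+1}(x))$, and absorbs the new term into the finite sum.

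It then remains to pass to the limit $n\to\infty$. Since $g$ is bounded on its domain, there is a constant $M$ with $|g(\sigma^n(x))|\le M$ for all $n$ and all admissible $x$, whence the remainder obeys
$$
\left|\frac{1}{(-q)^n}\,g\left(\sigma^n(x)\right)\right|\le\frac{M}{q^n}\to 0\quad\text{as }n\to\infty,
$$
because $q>1$. Therefore $g(x)=\sum^{\infty}_{k=1}\frac{\varepsilon_k}{(-q)^k}=f(x)$ for every $x$ in the domain, and uniqueness follows. The telescoping and the final limit are routine; the one point that genuinely requires the hypotheses is the control of the remainder, where boundedness of $g$ (to obtain the uniform bound $M$) and $q>1$ (to force $q^{-n}\to 0$) are both used. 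I would also remark that the word ``determined'' is what guarantees $g$ is well-defined at nega-$Q$-rational points, so that the single identity above pins down its value unambiguously there, consistent with the representation convention adopted earlier.
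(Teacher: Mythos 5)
Your proposal is correct and follows essentially the same route as the paper: iterate the functional equation to obtain the telescoped identity $g(x)=\sum_{k=1}^{n}\varepsilon_k/(-q)^k+(-q)^{-n}g(\sigma^n(x))$ and then use boundedness to kill the remainder as $n\to\infty$. Your write-up is in fact slightly more complete than the paper's, which performs the telescoping directly on $f$ and leaves the existence verification and the role of a generic bounded solution implicit.
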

\begin{proof} Really, for an arbitrary $ x=\Delta^{-Q} _{\varepsilon_1\varepsilon_2...\varepsilon_k...}$ from $[a_0-1,a_0]$, we have 
$$
f(x)=-\frac{\varepsilon_1}{q}-\frac{1}{q}f(\sigma(x))=
-\frac{\varepsilon_1}{q}-\frac{1}{q}\left(-\frac{\varepsilon_2}{q}-\frac{1}{q}f\left(\sigma^2(x)\right)\right)
$$
$$
=-\frac{\varepsilon_1}{q}+\frac{\varepsilon_2}{q^2}+\frac{1}{q^2}\left(-\frac{\varepsilon_3}{q}-\frac{1}{q}f\left(\sigma^3(x)\right)\right)=\dots =\sum^{k} _{n=1}{\frac{\varepsilon_n}{(-q)^n}}+\frac{1}{(-q)^k}f\left(\sigma^{k}(x)\right)=\dots .
$$
Whence,
$$
f(x)=\lim_{k\to\infty}{\left(\sum^{k} _{n=1}{\frac{\varepsilon_n}{(-q)^n}}+\frac{1}{(-q)^k}f\left(\sigma^{k}(x)\right)\right)}=\sum^{\infty} _{k=1}{\frac{\varepsilon_k}{(-q)^k}},
$$
since functions $f, \sigma^k$ (for any $k\in\mathbb N$) are determined and bounded on the domains, and also
$$
\frac{1}{(-q)^k}\le\frac{1}{q^k}\to 0 ~~~(k\to\infty).
$$
\end{proof}

Let us consider integral properties of $f$. One can use the last lemma,  relationships \eqref{eq: Cantor series 2} and \eqref{eq: Cantor series 3}, and definitions of the shift operator, of alternating Cantor series, and of  expansion \eqref{eq: nega-q}. 
\begin{theorem}
The Lebesgue integral of the function $f$ can be calculated by the
formula
$$
\left|\int_{[a_0-1,a_0]}{f(x)dx}\right|=\sum^{\infty} _{k=1}{\frac{q_k-1}{2q^k}}.
$$
\end{theorem}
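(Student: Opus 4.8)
The plan is to compute $I:=\int_{[a_0-1,a_0]}f(x)\,dx$ by exploiting the self-similar structure that the last lemma and the functional equation \eqref{def: function eq. system} impose on $f$, together with the affine action of $\sigma$ on cylinders. The cleanest route is a probabilistic reformulation of Lebesgue measure: since the domain $[a_0-1,a_0]$ has length $1$ and each rank-$m$ cylinder $\Delta^{-Q}_{c_1\ldots c_m}$ has length $1/(q_1\cdots q_m)$ with exactly $q_1\cdots q_m$ such cylinders, normalized Lebesgue measure makes the digit functions $\varepsilon_n(\cdot)$ independent, each $\varepsilon_n$ being uniformly distributed on $\{0,1,\dots,q_n-1\}$. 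Writing $f(x)=\sum_{n\ge1}\varepsilon_n(x)/(-q)^n$ from the last lemma and integrating term by term (justified by uniform convergence, since $|\varepsilon_n/(-q)^n|\le (q-1)/q^n$) reduces the task to evaluating $\sum_{n}(-q)^{-n}\int \varepsilon_n(x)\,dx$.

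First I would compute the single-digit integrals $\int_{[a_0-1,a_0]}\varepsilon_n(x)\,dx=\frac{1}{q_n}\sum_{j=0}^{q_n-1}j=\frac{q_n-1}{2}$, which follows from uniformity (equivalently: $\varepsilon_n$ is constant on each rank-$n$ cylinder and attains each value $j$ on a set of measure $1/q_n$). Substituting yields $I=\tfrac12\sum_{n\ge1}(-1)^n(q_n-1)/q^n$. As a cross-check consistent with the hint preceding the statement, the same value arises by recursion: split the domain into the $q_1$ rank-one cylinders $\Delta^{-Q}_{c_1}$; on each, $\varepsilon_1\equiv c_1$, and by \eqref{eq: Cantor series 3} the map $\sigma$ is an affine bijection of $\Delta^{-Q}_{c_1}$ onto the shifted domain with $|dx|=q_1^{-1}|d\sigma(x)|$. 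Inserting $f(x)=-\varepsilon_1/q-\tfrac1q f(\sigma x)$, using $\int\varepsilon_1\,dx=(q_1-1)/2$ and this change of variables, one gets $I=-\frac{q_1-1}{2q}-\frac1q I_1$, where $I_1$ is the analogous integral for the shifted sequence $(q_2,q_3,\dots)$; iterating and discarding the remainder $(-q)^{-k}I_k\to0$ (valid as $f$ is bounded and $q^{-k}\to0$) reproduces $\tfrac12\sum_{n\ge1}(-1)^n(q_n-1)/q^n$.

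The step I expect to be the main obstacle is the sign bookkeeping forced by the negative base $-q$. The factor $(-1)^n$ must be carried unchanged through both the expectation and the recursion and does not cancel, so the honest value of the integral is the \emph{alternating} series $\tfrac12\sum_{n\ge1}(-1)^n(q_n-1)/q^n$. Its absolute value coincides with the claimed $\sum_{k\ge1}(q_k-1)/(2q^k)$ — which equals exactly half the total variation $f(a_0)-f(a_0-1)=\sum_{k\ge1}(q_k-1)/q^k$ of the increasing function $f$ — only when the alternating signs happen to align, not for a general sequence $Q$; for instance, when $q_n\equiv q$ one gets $|I|=(q-1)/\bigl(2(q+1)\bigr)$, whereas $\sum_{k\ge1}(q_k-1)/(2q^k)=\tfrac12$. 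Hence the crux of any correct proof is to track the alternation $(-1)^n$ faithfully rather than the mere magnitude of the digit contributions, and I would expect the delicate point to be reconciling the resulting alternating sum with the form in which the integral is stated.
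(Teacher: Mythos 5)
Your computation is correct, and it does more than prove the statement by a different route: it shows the formula as printed cannot be right. Your primary argument (term-by-term integration of $f(x)=\sum_{n}\varepsilon_n(x)(-q)^{-n}$, using that Lebesgue measure on the length-one interval $[a_0-1,a_0]$ assigns measure $1/(q_1\cdots q_m)$ to each rank-$m$ cylinder, so the digits are independent and $\int\varepsilon_n\,dx=(q_n-1)/2$) is more elementary than the paper's, which runs the recursion coming from $f(x)=-\frac{\varepsilon_1}{q}-\frac{1}{q}f(\sigma(x))$ together with the change of variables $d(\sigma^{k-1}x)=-\frac{1}{q_k}d(\sigma^{k}x)$; your second paragraph reproduces that recursion as a cross-check and both give
$$
\int_{[a_0-1,a_0]}f(x)\,dx=\frac12\sum_{n=1}^{\infty}\frac{(-1)^{n}(q_n-1)}{q^{n}},
$$
which is the correct value. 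The paper instead obtains $-\sum_{n}\frac{q_n-1}{2q^{n}}$, because in passing from $-\frac{1}{q}\sum_{c_1}\int_{\Delta^{-Q}_{c_1}}f(\sigma(x))\,dx$ to $+\frac{1}{q}\int f(\sigma(x))\,d(\sigma(x))$ it uses $dx=-\frac{1}{q_1}d(\sigma(x))$ without also reversing the limits of integration (the shift is orientation-reversing on each cylinder), so every stage contributes $-\frac{q_k-1}{2q^{k}}$ instead of $\frac{(-1)^{k}(q_k-1)}{2q^{k}}$. Your counterexample settles the matter: for $q_n\equiv q$ the map $f$ is the identity on $[-\frac{q}{q+1},\frac{1}{q+1}]$, so $\left|\int f(x)\,dx\right|=\left|\frac{1}{q+1}-\frac12\right|=\frac{q-1}{2(q+1)}$ (equal to $\frac16$ for $q=2$), whereas the stated right-hand side equals $\frac12$. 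So the theorem should be corrected to assert that the integral equals the alternating sum $\frac12\sum_{k\ge1}(-1)^{k}(q_k-1)q^{-k}$; the quantity $\sum_{k\ge1}\frac{q_k-1}{2q^{k}}$ is half the total increase $f(a_0)-f(a_0-1)$, not the integral. The only point worth tightening in your write-up is the routine remark that the digit functions are defined and measurable off the countable (hence null) set of nega-$Q$-rationals.
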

\begin{proof}
Since cylinders $\Delta^{-Q} _{c_1c_2...c_m}$ are left-to-right situated when $m$ is even and are right-to-left situated when $m$ is odd, this property is true for cylinders $\Delta^{-q} _{c_1c_2...c_m}$, and 
$$
d\left(\sigma^{k-1} (x)\right)=-\frac{1}{q_k}d\left(\sigma^{k} (x)\right),
$$
we obtain
$$
I=\int_{[a_0-1,a_0]}{f(x)dx}=\lim_{k\to\infty}{\left(-\sum^{k} _{n=1}{\frac{q_n-1}{2q^n}}+\frac{1}{q^k}I_k\right)}
$$
$$
=\lim_{k\to\infty}{\left(-\sum^{k} _{n=1}{\frac{q_n-1}{2q^n}}+\frac{1}{q^k}\int^{\sup_{x}\sigma^k(x)} _{\inf_{x}\sigma^k(x)}{f(\sigma^k(x))d(\sigma^k(x))}\right)}=-\sum^{k} _{n=1}{\frac{q_n-1}{2q^n}},
$$
where:
$$
I_1=\sum^{q_1-1} _{c_1=0}{\int^{\sup\Delta^{-Q} _{c_1}} _{\inf \Delta^{-Q} _{c_1}}{f(x)dx}} 
=\sum^{q_1-1} _{c_1=0}{\int^{\sup\Delta^{-Q} _{c_1}} _{\inf \Delta^{-Q} _{c_1}}{\left(-\frac{c_1}{q}-\frac{1}{q}f(\sigma(x))\right)dx}}
$$
$$
=-\sum^{q_1-1} _{i=0}{\frac{i}{qq_1}}+\frac{q_1}{qq_1}\int^{\sup_{x}\sigma(x)} _{\inf_{x}\sigma(x)}{f(\sigma(x))d(\sigma(x))}=-\frac{q_1-1}{2q}+\frac{1}{q}\int^{\sup_{x}\sigma(x)} _{\inf_{x}\sigma(x)}{f(\sigma(x))d(\sigma(x))},
$$
since $\left|\Delta^{-Q} _{c_1}\right|=\frac{1}{q_1}$;
$$
\frac{1}{q}\int^{\sup_{x}\sigma(x)} _{\inf_{x}\sigma(x)}{f(\sigma(x))d(\sigma(x))}=\frac{1}{q}I_2=\frac{1}{q}\sum^{q_2-1} _{c_2=0}{\int^{\sup\Delta^{-Q^{'}} _{c_2}} _{\inf \Delta^{-Q^{'}} _{c_2}}{f(\sigma(x))d(\sigma(x))}} 
$$
$$
=\frac{1}{q}\sum^{q_2-1} _{c_2=0}{\int^{\sup\Delta^{-Q^{'}} _{c_2}} _{\inf \Delta^{-Q^{'}} _{c_2}}{\left(-\frac{c_2}{q}-\frac{1}{q}f(\sigma^2(x))\right)d(\sigma(x))}}
$$
$$
=\frac{1}{q}\left(-\sum^{q_2-1} _{i=0}{\frac{i}{qq_2}}+\frac{q_2}{qq_2}\int^{\sup_{x}\sigma(x)} _{\inf_{x}\sigma(x)}{f(\sigma^2(x))d(\sigma^2(x))}\right)=-\frac{q_2-1}{2q^2}+\frac{1}{q^2}\int^{\sup_{x}\sigma^2(x)} _{\inf_{x}\sigma^2(x)}{f(\sigma^2(x))d(\sigma^2(x))},
$$
since $\left|\Delta^{-Q^{'}} _{c_2}\right|=\frac{1}{q_2}$ and 
$$
x=\Delta^{-Q^{'}} _{\varepsilon_2\varepsilon_3...}=\sum^{\infty} _{k=2}{\frac{(-1)^{k-1}\varepsilon_k}{q_2q_3\cdots q_k}};
$$
$$
\dots \dots \dots \dots \dots \dots \dots \dots \dots \dots \dots \dots \dots \dots \dots \dots \dots \dots 
$$
$$
\frac{1}{q^{k-1}}\int^{\sup_{x}\sigma^{k-1}(x)} _{\inf_{x}\sigma^{k-1}(x)}{f(\sigma^{k-1}(x))d(\sigma^{k-1}(x))}=\frac{1}{q^{k-1}}I_k=\frac{1}{q^{k-1}}\sum^{q_k-1} _{c_k=0}{\int^{\sup\Delta^{-Q^{(k-1)}} _{c_k}} _{\inf \Delta^{-Q^{(k-1)}} _{c_k}}{f(\sigma^{k-1}(x))d(\sigma^{k-1}(x))}} 
$$
$$
=\frac{1}{q^{k-1}}\sum^{q_k-1} _{c_k=0}{\int^{\sup\Delta^{-Q^{{(k-1)}}} _{c_k}} _{\inf \Delta^{-Q^{(k-1)}} _{c_k}}{\left(-\frac{c_k}{q}-\frac{1}{q}f(\sigma^k(x))\right)d(\sigma^{k-1}(x))}}
$$
$$
=\frac{1}{q^{k-1}}\left(-\sum^{q_k-1} _{i=0}{\frac{i}{qq_k}}+\frac{q_k}{qq_k}\int^{\sup_{x}\sigma^{k-1}(x)} _{\inf_{x}\sigma^{k-1}(x)}{f(\sigma^{k}(x))d(\sigma^{k}(x))}\right)=-\frac{q_k-1}{2q^k}+\frac{1}{q^k}\int^{\sup_{x}\sigma^k(x)} _{\inf_{x}\sigma^k(x)}{f(\sigma^k(x))d(\sigma^k(x))},
$$
since $\left|\Delta^{-Q^{(k-1)}} _{c_k}\right|=\frac{1}{q_k}$ and 
$$
x=\Delta^{-Q^{(k-1)}} _{\varepsilon_k\varepsilon_{k+1}...}=\sum^{\infty} _{n=k}{\frac{(-1)^{n-k-1}\varepsilon_n}{q_kq_{k+1}\cdots q_n}}.
$$
This completes the proof.
\end{proof}

%%%%%%%%%%%%%%%%%%%%%%%%%%%%%%%
\section{Some fractals  defined  in terms of  certain maps in $\mathbb R^2$}
%%%%%%%%%%%%%%%%%%%%%%%%%%%%%%%

Let us consider the following function
$$
h: x=\Delta^{-q} _{\underbrace{u\ldots u}_{\alpha_1-1}\alpha_1\underbrace{u\ldots u}_{\alpha_2-1}\alpha_2\ldots \underbrace{u\ldots u}_{\alpha_n-1}\alpha_n\ldots} \longrightarrow \Delta^{-q} _{\alpha_1\alpha_2...\alpha_n...},
$$
where $u\in \{0,1, \dots , q-1\}$ is a fixed number,  $\alpha_n \in \Theta=\{1,2, \dots , q-1\}\setminus\{u\}$, and $3<q$ is a fixed positive integers. 
This function can be represented by the following form.
$$
h: x=-\frac{u}{q+1}+\sum^{\infty} _{n=1}{\frac{\alpha_n-u}{(-q)^{\alpha_1+\alpha_2+\dots +\alpha_n}}}
 \longrightarrow \sum^{\infty} _{n=1}{\frac{\alpha_n}{(-q)^{n}}}=h(x)=y.
$$

\begin{theorem}
The function $h$ has the following properties:
\begin{enumerate}
\item The domain of definition $D(h)$ of the function $h$ is a set having the following properties:
\begin{itemize}
\item $D(h)$ is an uncountable,   perfect,   and nowhere dense set;
\item the Lebesgue measure of $D(h)$ equals zero;
\item    $D(h)$  is a self-similar fractal whose Hausdorff dimension $\alpha_0$ satisfies the  equation 
$$
\sum _{p \ne u, p \in \{1,2,\dots , q-1\}} {\left(\frac{1}{q}\right)^{p \alpha_0}}=1.
$$
\end{itemize}

\item The range of values $E(h)$ of $h$ is a self-similar fractal 
$$
E(h)=\{y: y=\Delta^{-q} _{\alpha_1\alpha_2...\alpha_n...}, \alpha_n\in\Theta\}
$$
for  which  the Hausdorff dimension $\alpha_0$ equals $\log_q {|\Theta|},$ where $|\cdot|$ is the number of elements of a set.

\item The function $h$  is well defined and is a bijective mapping on the domain.

\item The function $h$ is continuous at any point on the domain.

\item On the domain of definition the function $h$ is:
\begin{itemize}
\item decreasing whenever $u\in\{0,1\}$ for all $q>3$;
\item increasing  whenever $u\in \{q-2,q-1\}$ for all $q>3$;
\item  not monotonic  whenever $u\in\{2,3, \dots , q-3\}$ and $q>4$.
\end{itemize}

\item The function $h$ is non-differentiable on the domain.

\item The following relationship is true for any positive integer $n$:
$$
h\left(\sigma^{\alpha_1+\alpha_2+\dots +\alpha_n}(x)\right)=\sigma^n(h(x)).
$$
Here $\sigma$ is the shift operator.

\item The function does not preserve the Hausdorff dimension.
\end{enumerate}
\end{theorem}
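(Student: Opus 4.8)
The plan is to recognise both the domain $D(h)$ and the range $E(h)$ as attractors of iterated function systems and to read off almost every assertion from the coding. For the range, each symbol $p\in\Theta$ prepends a single nega-$q$ digit, so $E(h)=\bigcup_{p\in\Theta}\psi_p(E(h))$ with $|\Theta|$ similarities of ratio $q^{-1}$ satisfying the open set condition; Moran's equation $|\Theta|q^{-\alpha_0}=1$ then gives $\dim_H E(h)=\log_q|\Theta|$, which is property (2). For the domain the symbol $p$ instead prepends the whole block $u^{p-1}p$, so $D(h)=\bigcup_{p\in\Theta}\phi_p(D(h))$, where $\phi_p$ fixes the first $p$ nega-$q$ digits and hence contracts by $q^{-p}$; Moran's equation becomes $\sum_{p\in\Theta}q^{-p\alpha_0}=1$, the displayed equation in (1). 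From this I would derive the rest of (1): the attractor of at least two similarities with the open set condition is uncountable, perfect and nowhere dense, and since at $\alpha=1$ one has $\sum_{p\in\Theta}q^{-p}\le\sum_{p\ge1}q^{-p}=\tfrac{1}{q-1}<1$ for $q>3$, the dimension $\alpha_0$ is strictly below $1$, forcing Lebesgue measure zero.

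Well-definedness and bijectivity (3) reduce to the statement that the block decoding $x\mapsto(\alpha_n)$ is unambiguous: reading the nega-$q$ digits of $x$ from the left, each maximal run of $u$'s terminated by the first non-$u$ symbol is exactly one block, which recovers $(\alpha_n)$ uniquely once the non-$u$-terminating convention is fixed (the two representations of a nega-$q$-rational being handled as in the remark after Theorem~\ref{th: 3}). Both coding maps are injective, so $h$, being the relabelling $(\alpha_n)\mapsto(\alpha_n)$ between the two codings, is a bijection onto $E(h)$. Continuity (4) is then immediate from total disconnectedness: $h$ carries the rank-$m$ code-cylinder of $x$ into the rank-$m$ code-cylinder of $h(x)$, and both families shrink to points, so that nearby arguments have nearby images; I would quantify this with the same geometric-series estimate used in the proof of Theorem~\ref{th: 3}. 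The shift identity (7) is pure reindexing: $\sigma^{S_n}$ deletes the first $n$ complete blocks (the first $S_n=\alpha_1+\dots+\alpha_n$ nega-$q$ digits), leaving the code $(\alpha_{n+1},\alpha_{n+2},\dots)$, whose image under $h$ is $\sum_{m\ge1}\alpha_{n+m}(-q)^{-m}=\sigma^n(h(x))$.

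The delicate point, and the step I expect to be the main obstacle, is the monotonicity trichotomy (5). I would take $x_1,x_2\in D(h)$ whose codes first differ at block $n_0$, set $a=\min(\alpha^{(1)}_{n_0},\alpha^{(2)}_{n_0})$, and observe that the first position at which the two nega-$q$ digit strings differ is $j_0=S_{n_0-1}+a$, where the code realising the minimum carries its terminal symbol $a$ and the other still carries $u$. Because $a$ is the smaller of two distinct elements of $\Theta$, the comparison of $a$ with $u$ is forced in the extreme regimes: $a>u$ throughout when $u\in\{0,1\}$, and $a<u$ throughout when $u\in\{q-2,q-1\}$ (note that even for $u=q-2$, where $q-1\in\Theta$, the minimum of two distinct symbols is at most $q-3<u$). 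Thus the arrangement of $x_1,x_2$ in the $x$-order is governed by the parity of $j_0$, while the arrangement of $h(x_1),h(x_2)$ is governed by the parity of $n_0$, and the entire classification comes down to comparing the parity of $S_{n_0-1}+a$ with that of $n_0$. Controlling this parity relation uniformly in the two extreme regimes, and exhibiting for intermediate $u\in\{2,\dots,q-3\}$ two explicit pairs along which $S_{n_0-1}+a$ and $n_0$ have first matching and then opposite parities (so as to force non-monotonicity), is the heart of the matter; the difficulty is precisely that $\alpha_{n_0}$ enters every later partial sum $S_n$, so the clean ``first differing digit'' reduction must be combined with careful parity bookkeeping on the sums $S_n$, and I regard this as where the proof is genuinely at risk.

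Finally, non-differentiability (6) follows from the scale mismatch between the two codings: the rank-$n$ code-cylinder about $x_0$ has $x$-length $\asymp q^{-S_n}$ but image-length $\asymp q^{-n}$, so the difference quotient across it is $\asymp q^{S_n-n}=q^{\sum_{i\le n}(\alpha_i-1)}$, which diverges whenever infinitely many $\alpha_i\ge2$; approaching any $x_0$ through such cylinders makes the quotient unbounded, so no finite derivative exists (the eventually-all-$1$ codes, possible only when $u\ne1$, are reached through cylinders with large $S_n-n$ and are treated identically). The same two dimension formulas settle (8): since $\dim_H D(h)=\alpha_0$ solves $\sum_{p\in\Theta}q^{-p\alpha_0}=1$ while $\dim_H h(D(h))=\dim_H E(h)=\log_q|\Theta|$ solves $\sum_{p\in\Theta}q^{-\alpha}=1$, and these two equations have different roots because $\Theta$ contains distinct exponents $p$ with at least one $p\ge2$, the bijection $h$ carries a set of Hausdorff dimension $\alpha_0$ onto one of different dimension, so it does not preserve Hausdorff dimension.
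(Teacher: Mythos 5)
Your treatment of properties (1)--(4) and (6)--(8) follows essentially the same route as the paper: Moran-type equations for the two fractal dimensions, uniqueness of the block decoding for well-definedness and bijectivity, the shrinking-cylinder estimate for continuity, the mismatch between the argument scale $q^{-S_n}$ and the image scale $q^{-n}$ for non-differentiability, and the comparison of the two dimension equations for (8). Those parts are sound (your parenthetical about eventually-all-$1$ codes in (6) is no less sketchy than the paper's own handling of that case, but the idea of perturbing the $n$-th block symbol does work).

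The genuine gap is exactly where you flagged it: property (5) is never actually proved in your proposal, and it cannot be, because the parity of $j_0=S_{n_0-1}+a$ is \emph{not} determined by the parity of $n_0$ --- it also depends on how many of $\alpha_1,\dots,\alpha_{n_0-1}$ are even, which is unconstrained. Concretely, take $q=5$, $u=0$ and the three points with codes $(1,1,1,\dots)$, $(2,1,1,\dots)$, $(2,2,1,1,\dots)$, i.e.
$$
x_1=\Delta^{-5}_{111\ldots}=-\tfrac{1}{6},\qquad x_2=\Delta^{-5}_{02111\ldots}=\tfrac{11}{150},\qquad x_3=\Delta^{-5}_{020211\ldots}=\tfrac{2}{25}+\tfrac{2}{625}-\tfrac{1}{3750}.
$$
Then $x_1<x_2<x_3$, while $h(x_1)=-\tfrac{1}{6}$, $h(x_2)=\Delta^{-5}_{2111\ldots}=-\tfrac{11}{30}$ and $h(x_3)=\Delta^{-5}_{2211\ldots}=-\tfrac{49}{150}$, so $h(x_1)>h(x_2)$ but $h(x_2)<h(x_3)$: the order is reversed on the first pair and preserved on the second, hence $h$ is not monotone for $u=0$. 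The paper's own argument for (5) breaks at the same spot: it asserts $\tau^{'}_{k_0}\le k^{'}_0$ and deduces that $k_0$ even forces $\tau_{k_0}$ even, which is false (in the pair $(x_2,x_3)$ one has $k_0=2$ but $\tau_2=3$). So your instinct that the parity bookkeeping is ``where the proof is genuinely at risk'' was correct, and the honest conclusion is stronger than a gap in your write-up: the first differing digit sits at position $S_{n_0-1}+a$, whose parity floats freely relative to that of $n_0$, so $h$ fails to be monotone for every admissible $u$, and property (5) as stated is wrong. Everything else in your proposal stands.
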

\begin{proof}
The domain  $D(h)$ of the function $h$ is   a set whose elements represented in terms of nega-$q$-ary representation~\eqref{eq: nega-q}. Representations of elements of $D(h)$ contain only  combinations of digits from a some subset of the following set (it depends on  fixed papameters $u$ and $q$):
$$
\left\{1, u2, uu3, \dots, \underbrace{u\ldots u}_{u-2}[u-1], \underbrace{u\ldots u}_{u}[u+1], \dots , \underbrace{u\ldots u}_{q-2}[q-1] \right\}.
$$
That is (see \cite{ {S. Serbenyuk 2017  fractals}, {S. Serbenyuk   fractals}}), such set is a self-similar fractal whose Hausdorff dimension $\alpha_0$ satisfies the following equation 
$$
\sum _{p \ne u, p \in \{1,2,\dots , q-1\}} {\left(\frac{1}{q}\right)^{p \alpha_0}}=1.
$$
Hence this set is an uncountable,   perfect,  and  nowhere dense set of zero Lebesgue measure. 

\emph{The second property} follows from the definition of $h$.  

\emph{Property~8} follows from the first and second properties, since $\alpha_0(D(h))\ne \alpha_0(E(h))$, where $\alpha_0(\cdot)$ is the Hausdorff dimension of a set.

Let us prove \emph {Property~3 and Property~4.} The set $D(h)$ does not contain numbers with zeros in own nega-$q$-representations whenever $u>0$. If $u=0$, then  $D(h)$ does not contain numbers having a period $(0[q-1]))$ or $([q-1]0)$  in own nega-$q$-representations. Whence, $D(h)$ does contain nega-$q$-rational numbers i.e., numbers of the form
$$
\Delta^{-q} _{\alpha_1\alpha_2...\alpha_{n-1}[\alpha_{n}-1](0[q-1])}=\Delta^{-q} _{\alpha_1\alpha_2...\alpha_{n-1}\alpha_{n}([q-1]0)}.
$$
That is, any element of $D(h)$  has the unique nega-$q$-representation. Therefore the condition $h(x_1)\ne h(x_2)$ holds for $x_1\ne~x_2$. Let us note that a value $h(x)\in E(h)$ is assigned to an arbitrary $x\in D(h)$ and  vice versa.

Let us consider a nega-$q$-cylinder $\Delta^{-q} _{\underbrace{u\ldots u}_{c_1-1}c_1\underbrace{u\ldots u}_{c_2-1}c_2\ldots \underbrace{u\ldots u}_{c_n-1}c_n}$. Since
$$
D(h)\ni x=\bigcap^{\infty} _{n=1}{\Delta^{-q} _{\underbrace{u\ldots u}_{c_1-1}c_1\underbrace{u\ldots u}_{c_2-1}c_2\ldots \underbrace{u\ldots u}_{c_n-1}c_n}},
$$
let us consider $x,x_0\in \left(D(h) \cap\Delta^{-q} _{\underbrace{u\ldots u}_{c_1-1}c_1\underbrace{u\ldots u}_{c_2-1}c_2\ldots \underbrace{u\ldots u}_{c_n-1}c_n} \right)$. Then 
$$
|h(x)-h(x_0)|=|h\left(\Delta^{-q} _{\underbrace{u\ldots u}_{c_1-1}c_1\ldots \underbrace{u\ldots u}_{c_n-1}c_n\underbrace{u\ldots u}_{\alpha_{n+1}-1}\alpha_{n+1}(x)\underbrace{u\ldots u}_{\alpha_{n+2}-1}\alpha_{n+2}(x)...}\right)
$$
$$
-h\left(- \Delta^{-q} _{\underbrace{u\ldots u}_{c_1-1}c_1\ldots \underbrace{u\ldots u}_{c_n-1}c_n\underbrace{u\ldots u}_{\alpha_{n+1}-1}\alpha_{n+1}(x_0)\underbrace{u\ldots u}_{\alpha_{n+2}-1}\alpha_{n+2}(x_0)...}\right)|
$$
$$
\le \left|\sum^{\infty} _{k=n+1}{\frac{\alpha_k(x)-\alpha_k(x_0)}{(-q)^k}}\right|< \frac{1}{q^{n}} \to 0 ~~~~~(n\to \infty).
$$
So, for any $x_0\in D(h)$, the following holds
$$
\lim_{x\to x_0}{|h(x)-h(x_0)|}=0.
$$

%%%%%%%%%%%%%%%%%%%%%%%%%%%%%%%%%%%%%%%%%%%%%%
Let us prove \emph{Property~5}. Consider $x_1,x_2\in D(h)$, $x_1\ne x_2$, i.e., 
$$
x_1=\Delta^{-q} _{\underbrace{u\ldots u}_{\alpha_1-1}\alpha_1\underbrace{u\ldots u}_{\alpha_2-1}\alpha_2\ldots \underbrace{u\ldots u}_{\alpha_{k_0-1}-1}\alpha_{k_0-1}\underbrace{u\ldots u}_{\alpha_{k_0}-1}\alpha_{k_0}\underbrace{u\ldots u}_{\alpha_{k_0+1}-1}\alpha_{k_0+1}\ldots}
$$
and
$$
x_2=\Delta^{-q} _{\underbrace{u\ldots u}_{\gamma_1-1}\gamma_1\underbrace{u\ldots u}_{\gamma_2-1}\gamma_2\ldots \underbrace{u\ldots u}_{\gamma_{k_0-1}-1}\gamma_{k_0-1}\underbrace{u\ldots u}_{\gamma_{k_0}-1}\gamma_{k_0}\underbrace{u\ldots u}_{\gamma_{k_0+1}-1}\gamma_{k_0+1}\ldots},
$$
where $\alpha_i=\gamma_i$ for $i=\overline{1,k_0-1}$ and $\alpha_{k_0}\ne\gamma_{k_0}$. Whence, 
$$
y_1=h(x_1)=\Delta^{-q} _{\alpha_1\alpha_2\ldots\alpha_{k_0-1}\alpha_{k_0}\alpha_{k_0+1}\ldots}, ~~~ y_2=h(x_2)=\Delta^{-q} _{\gamma_1\gamma_2\ldots \gamma_{k_0-1}\gamma_{k_0}\gamma_{k_0+1}\ldots},
$$
and $y_1<y_2$ whenever the following system of conditions holds:
\begin{equation}
\label{eq: system conditions 1}
\begin{cases}
\alpha_{k_0}<\gamma_{k_0} &\text{if $k_0$ is even}\\
\alpha_{k_0}>\gamma_{k_0} &\text{if $k_0$ is odd.}
\end{cases}
\end{equation}

Let us consider $D(h)$ more detail.

Suppose $u=0$. Then 
$$
x_1=\Delta^{-q} _{\underbrace{0\ldots 0}_{\alpha_1-1}\alpha_1\underbrace{0\ldots 0}_{\alpha_2-1}\alpha_2\ldots \underbrace{0\ldots 0}_{\alpha_{k_0-1}-1}\alpha_{k_0-1}\underbrace{0\ldots 0}_{\alpha_{k_0}-1}\alpha_{k_0}\underbrace{0\ldots 0}_{\alpha_{k_0+1}-1}\alpha_{k_0+1}\ldots}=\sum^{\infty} _{k=1}{\frac{\alpha_k}{(-q)^{\alpha_1+\alpha_2+\dots +\alpha_k}}}
$$
and
$$
x_2=\Delta^{-q} _{\underbrace{0\ldots 0}_{\gamma_1-1}\gamma_1\underbrace{0\ldots 0}_{\gamma_2-1}\gamma_2\ldots \underbrace{0\ldots 0}_{\gamma_{k_0-1}-1}\gamma_{k_0-1}\underbrace{0\ldots 0}_{\gamma_{k_0}-1}\gamma_{k_0}\underbrace{0\ldots 0}_{\gamma_{k_0+1}-1}\gamma_{k_0+1}\ldots}=\sum^{\infty} _{k=1}{\frac{\gamma_k}{(-q)^{\gamma_1+\gamma_2+\dots +\gamma_k}}}.
$$

Suppose that $\tau_{k_0}:=\min\{\alpha_1+\alpha_2+\dots +\alpha_{k_0}, \gamma_1+\gamma_2+\dots +\gamma_{k_0}\}$.

Since, for $\Delta^{-q} _{\underbrace{0\ldots 0}_{\alpha-1}\alpha(0)}<\Delta^{-q} _{\underbrace{0\ldots 0}_{\beta-1}\beta(0)}$, the system of conditions 
$$
\begin{cases}
\Delta^{-q} _{\underbrace{000000\ldots 0000000}_{\alpha-1}\alpha(0)}<\Delta^{-q} _{\underbrace{000\ldots 00}_{\beta-1}\beta(0)} &\text{if $\min\{\alpha,\beta\}$ is even and $\alpha>\beta$}\\
\Delta^{-q} _{\underbrace{000\ldots 00}_{\alpha-1}\alpha(0)}<\Delta^{-q} _{\underbrace{00000\ldots 0000000}_{\beta-1}\beta(0)} &\text{if $\min\{\alpha,\beta\}$ is odd and $\alpha<\beta$}
\end{cases}
$$
is true, we have that $x_1<x_2$ whenever the  following system of conditions holds:
\begin{equation}
\label{eq: system conditions 2}
\begin{cases}
\alpha_{k_0}>\gamma_{k_0} &\text{if $\tau_{k_0}$ is even}\\
\alpha_{k_0}<\gamma_{k_0} &\text{if $\tau_{k_0}$ is odd.}
\end{cases}
\end{equation}
However, since 
$$
\tau_{k_0}\equiv \sum^{k_0} _{j=1}{i^{'} _{j} (\mod 2)}\equiv \tau^{'} _{k_0} (\mod 2),
$$
 where $i^{'} _{j}, \tau^{'} _{k_0}\in\{0,1\}$, we obtain $\tau^{'} _{k_0}\le k^{'} _0\equiv k_0 (\mod 2)$. Here $k^{'} _0\in\{0,1\}$. 

So, if $k_0$ is even, then $\tau_{k_0}$ is even and from \eqref{eq: system conditions 1}, 
\eqref{eq: system conditions 2} it follows that $y_2>y_1$ for $x_1<x_2$, i.e., the function $h$ is decreasing. 

By analogy, if  $k_0$ is odd, then $\tau_{k_0}$ is even or is odd but $h$ is a decreasing function.

Let us remark that $h$ is decreasing for the case when $u=1$ because corresponding  considerations for $u=0$ and $u=1$ are identical. 

Let $u\in\{2,3, \dots , q-3\}, q>4$. Then $\Delta^{-q} _{\underbrace{u\ldots u}_{\alpha-1}\alpha(0)}<\Delta^{-q} _{\underbrace{u\ldots u}_{\beta-1}\beta(0)}$ whenever one of the following cases holds:
\begin{itemize}
\item $\min\{\alpha, \beta\}$ is odd, $\alpha>\beta$, and $u>\beta$;
\item $\min\{\alpha, \beta\}$ is odd, $\alpha<\beta$, and $u<\alpha$;
\item $\min\{\alpha, \beta\}$ is even, $\alpha<\beta$, and $u>\alpha$;
\item $\min\{\alpha, \beta\}$ is even, $\alpha>\beta$, and $u<\beta$.
\end{itemize}
Note that exist $\alpha_n$ such that $\alpha_n<u$ and  also $\alpha_n>u$  in our case (when $u\in\{2,3, \dots , q-3\}, q>4$). For example:
\begin{itemize}
\item if $u=2$, then $\alpha_n=1<2$ and $2<\alpha_n\in\{3,4,\dots , q-1\}$;
\item  if $u=3$, then $\{1,2\}\ni \alpha_n<3$ and $3<\alpha_n\in\{4,5, \dots , q-1\}$;
\item  if $u=q-3$, then $\{1,2, \dots , q-4\}\ni \alpha_n<u<\alpha_n\in\{q-2, q-1\}$.
\end{itemize}
So, $h$ is not monotonic in this case.

Suppose $u\in \{q-2, q-1\}$. Then  $\Delta^{-q} _{\underbrace{u\ldots u}_{\alpha-1}\alpha(0)}<\Delta^{-q} _{\underbrace{u\ldots u}_{\beta-1}\beta(0)}$ whenever one of the following holds:
\begin{itemize}
\item  $\min\{\alpha, \beta\}$ is odd and $\alpha>\beta$;
\item  $\min\{\alpha, \beta\}$ is even and  $\alpha<\beta$.
\end{itemize}
Hence $x_1<x_2$ whenever
$$
\begin{cases}
\alpha_{k_0}<\gamma_{k_0} &\text{if $\tau_{k_0}$ is even}\\
\alpha_{k_0}>\gamma_{k_0} &\text{if $\tau_{k_0}$ is odd}.
\end{cases}
$$
Using  \eqref{eq: system conditions 1}, $k^{'} _0$, and $\tau^{'} _{k_0}$, we obtain that 
\begin{itemize}
\item if $k_0$ is even, then $\tau_{k_0}$ is even and $h$ is an increasing function;
\item if $k_0$ is odd, then $h$ is an increasing function for the cases of even and odd $\tau_{k_0}$.
\end{itemize}

Let us prove \emph{the 6th property}.  By analogy with similar investigations for positive expansions (see~\cite{Serbenyuk 2019 functions}), we obtain the following.  Let $\Delta^{-q} _{\underbrace{u\ldots u}_{c_1-1}c_1\underbrace{u\ldots u}_{c_2-1}c_2\ldots \underbrace{u\ldots u}_{c_{n-1}-1}c_{n-1}}$ be an arbitrary cylinder. Then let us consider a sequence $(x_n)$ of numbers 
$$
x_n=\Delta^{-q} _{\underbrace{u\ldots u}_{c_1-1}c_1\underbrace{u\ldots u}_{c_2-1}c_2\ldots\underbrace{u\ldots u}_{c_{n-1}-1}c_{{n} -1}  \underbrace{u\ldots u}_{\alpha_{n} -1}\alpha_{n}\underbrace{u\ldots u}_{\alpha_{n+1}-1}\alpha_{n+1}\ldots}
$$
 and a fixed number  $x_0=\Delta^{-q} _{\underbrace{u\ldots u}_{c_1-1}c_1\underbrace{u\ldots u}_{c_2-1}c_2\ldots\underbrace{u\ldots u}_{c_{n-1}-1}c_{{n}-1}  \underbrace{u\ldots u}_{c-1}c\underbrace{u\ldots u}_{\alpha_{n+1}-1}\alpha_{n+1}\ldots}$, where $c$ is a fixed number. Then
$$
\lim_{x\to x_0}{\frac{h(x)-h(x_0)}{x-x_0}}=\lim_{x\to x_0}{\frac{\frac{\alpha_n-c}{(-q)^n}}{\frac{\alpha_n}{(-q)^{c_1+c_2+\dots +c_{n-1}+\alpha_n}}-\frac{c}{(-q)^{c_1+c_2+\dots +c_{n-1}+c}}}}.
$$
Since conditions $x_n\to x_0$ and $n\to\infty$ are equivalent  and $\frac{\alpha_n-c}{\alpha_n(-q)^c-c(-q)^{\alpha_n}}$ is a some number, we get
$$
\lim_{x\to x_0}{\frac{h(x)-h(x_0)}{x-x_0}}=\lim_{n\to \infty}{\left(\frac{\alpha_n-c}{\alpha_n(-q)^c-c(-q)^{\alpha_n}}(-q)^{c_1+c_2+\dots +c_{n-1}+\alpha_n+c-n}\right)}=\pm \infty
$$
whenever there exists an infinite number of $c_n\ne 1$, because  $c_1+c_2+\dots +c_{n-1} \ge n-1$ and $c_1+c_2+\dots +c_{n-1} = n-1$ when $c_j=1$, $j=\overline{1,n-1}$.

If all  $c_j=1$, where $j=\overline{1,n-1}$, then 
$$
\lim_{x\to x_0}{\frac{h(x)-h(x_0)}{x-x_0}}=\lim_{n\to \infty}{\left(\frac{\alpha_n-c}{\alpha_n(-q)^c-c(-q)^{\alpha_n}}(-q)^{\alpha_n+c-1}\right)}.
$$
However, $\alpha_n, c$ are fixed different numbers. So, the function $h$ is non-differentiable.

\emph{Property 7}. It is easy to see that 
$$
h\left(\sigma^{\alpha_1+\alpha_2+\dots +\alpha_n}(x)\right)=h(\Delta^{-q} _{\underbrace{u\ldots u}_{\alpha_{n+1}-1}\alpha_{n+1}\ldots\underbrace{u\ldots u}_{\alpha_{n+2}-1}\alpha_{n+2}   \ldots})=\Delta^{-q} _{\alpha_{n+1}\alpha_{n+3}...}=\sigma^n(h(x)).
$$
\end{proof}

\begin{theorem}
The Hausdorff dimension of a graph of the function  $h$ is equal to $1$.
\end{theorem}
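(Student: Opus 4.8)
The plan is to realise the graph $\Gamma=\{(x,h(x)):x\in D(h)\}$ as a self-affine set and to bound $\dim_H\Gamma$ by $1$ from above and below. Coding a point of $\Gamma$ by its sequence $(\alpha_n)\in\Theta^{\mathbb N}$, the two series defining $x$ and $h(x)$ together with Property~7 show that fixing the first digit $\alpha_1=a$ sends $(x,h(x))$ to an affine image under
$$
\Phi_a(X,Y)=\left(\frac{1}{(-q)^{a}}X+c_a,\ \frac{1}{-q}\,Y+d_a\right),
$$
with suitable constants $c_a,d_a$; hence $\Gamma$ is the attractor of $\{\Phi_a\}_{a\in\Theta}$. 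I would first record that the cylinder piece coded by $(a_1,\dots,a_n)$ lies in a rectangle of height $\asymp q^{-n}$ and width $\asymp q^{-(a_1+\dots+a_n)}$, using the nega-$q$ cylinder formulas already established; both bounds rely on these extents.

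For the upper bound I would cover $\Gamma$ at level $n$ by its $|\Theta|^{n}$ cylinder rectangles. Since $a_1+\dots+a_n\ge n$, each rectangle has both sides at most $\asymp q^{-n}$ and therefore fits in one square of side $\asymp q^{-n}$, so $\Gamma$ is covered by $\asymp|\Theta|^{n}$ such squares; letting $n\to\infty$ yields $\dim_H\Gamma\le 1$. The only point to check carefully is that each nega-$q$ rectangle genuinely contains its piece of $\Gamma$, which follows from the nested intersection $\bigcap_m\Delta^{-q}_{c_1\dots c_m}=\{x\}$ used in the earlier continuity proof.

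The decisive and delicate step is the matching lower bound $\dim_H\Gamma\ge 1$, and I expect this to be the main obstacle. The standard route is the mass distribution principle: push a measure $\mu$ forward from $\Theta^{\mathbb N}$ to $\Gamma$ and verify $\mu\bigl(B(z,r)\bigr)\le Cr$ for all small $r$. The geometry fights this, because the cylinder rectangles are very tall and thin (width $q^{-(a_1+\dots+a_n)}\le q^{-n}=$ height), so a disc of radius $r$ can meet a long vertical stack of cylinders; the uniform Bernoulli measure then only gives $\mu(B(z,r))\asymp r^{\log_q|\Theta|}$, i.e.\ the exponent $\log_q|\Theta|<1$ rather than $1$. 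To force the exponent up to $1$ one must exploit the strong horizontal contraction $q^{-a}$ together with the non-differentiability established in Property~6 --- for instance by choosing a non-uniform (Markov) weighting of the codings so that horizontal spread along each fibre compensates the vertical stacking, or by showing that the orthogonal projection of $\Gamma$ onto a suitable line has positive length, which would give $\dim_H\Gamma\ge 1$ directly. Constructing such a measure or projection, and thereby certifying that $\Gamma$ attains a full unit of dimension even though $D(h)$ and $E(h)$ both have dimension strictly below $1$, is the crux on which the stated value rests.
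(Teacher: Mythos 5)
Your proposal is incomplete at exactly the point you flag as the crux, and that gap cannot be closed, because the covering you construct for the upper bound in fact proves more than you claim and refutes the stated value. Your $|\Theta|^{n}$ level-$n$ rectangles have height exactly $q^{-n}$ (the $y$-cylinder $\Delta^{-q}_{\alpha_1\dots\alpha_n}$) and width $q^{-(\alpha_1+\dots+\alpha_n)}\le q^{-n}$, so each is covered by $O(1)$ squares of side $q^{-n}$ and hence $\overline{\dim}_{B}\Gamma\le\log_{q}|\Theta|$, which is strictly less than $1$ since $|\Theta|\in\{q-2,\,q-1\}<q$. In the other direction, the orthogonal projection of $\Gamma$ onto the $y$-axis is $E(h)$, a self-similar set of Hausdorff dimension $\log_{q}|\Theta|$ (Property~2 of the preceding theorem), and Lipschitz projections do not increase dimension. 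Combining, $\log_{q}|\Theta|\le\dim_{H}\Gamma\le\overline{\dim}_{B}\Gamma\le\log_{q}|\Theta|$, so $\dim_{H}\Gamma=\log_{q}|\Theta|<1$. Consequently no non-uniform Markov measure, mass-distribution estimate, or positive-length projection can produce the lower bound $\dim_{H}\Gamma\ge1$: your observation that the uniform Bernoulli measure only yields the exponent $\log_{q}|\Theta|$ because of the vertical stacking of thin cylinders is not an obstacle to be overcome but the actual value of the dimension.

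For comparison, the paper's proof runs essentially your upper-bound computation and then stops: it covers $\Gamma$ by $\tau^{m}$ squares ($\tau=|\Theta|$) and obtains $\widehat{H}_{\alpha}(\Gamma)=\lim 2^{\alpha/2}\left(\tau/q^{\alpha}\right)^{m}$, which vanishes for every $\alpha>\log_{q}\tau$, not merely for $\alpha>1$; the concluding equality $\alpha^{K}(\Gamma)=\alpha_{0}(\Gamma)=1$ is then asserted from unspecified ``self-similar properties'' with no lower-bound argument at all. So the paper has the same gap you identified, and its own displayed estimate is in tension with the claimed value $1$: that template comes from the author's earlier papers where all $q$ digits are admissible, so $\tau=q$ and the exponent is genuinely $1$, whereas here the restriction $\alpha_n\in\Theta$ lowers it to $\log_{q}|\Theta|$. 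In short: your upper bound is correct (indeed stronger than you state), your lower-bound step is missing and unfillable, and your diagnosis, pushed one step further via the $y$-axis projection, disproves the theorem as written rather than proving it.
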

\begin{proof} 
Let us prove the statement by analogy with the similar proof for positive expansions~(\cite{Serbenyuk 2019 functions}; for example, for other functions,  such proof is given in~\cite{S. Serbenyuk functions with complicated local structure 2013}).

Suppose that 
$$
X=\left[-\frac{q}{q+1},\frac{1}{q+1}\right]\times\left[-\frac{q}{q+1},\frac{1}{q+1}\right]
$$
$$
=\left\{(x,y): x=\sum^{\infty} _{m=1} {\frac{\alpha_m}{(-q)^{m}}}, \alpha_{m} \in \Theta_q=\{0,1,\dots ,q-1\},
y=\sum^{\infty} _{m=1} {\frac{\beta_m}{(-q)^{m}}}, \beta_{m} \in \Theta_q \right\}.
$$
Then the set 
$$
\sqcap_{(\alpha_{1}\beta_{1})(\alpha_{2}\beta_{2})...(\alpha_{m}\beta_{m})}=\Delta^{-q} _{\alpha_{1}\alpha_{2}...\alpha_{m}}\times\Delta^{-q} _{\beta_{1}\beta_{2}...\beta_{m}}
$$
is a square with a side length of $q^{-m}$. This square is called \emph{a square of rank $m$ with the base $(\alpha_{1}\beta_{1})(\alpha_{2}\beta_{2})\ldots (\alpha_{m}\beta_{m})$}.

It is known that if  $E\subset X$, then the number
$$
\alpha^{K}(E)=\inf\{\alpha: \widehat{H}_{\alpha} (E)=0\}=\sup\{\alpha: \widehat{H}_{\alpha} (E)=\infty\},
$$
where
$$
\widehat{H}_{\alpha} (E)=\lim_{\varepsilon \to 0} \left[{\inf_{d\leq \varepsilon} {K(E,d)d^{\alpha}}}\right]
$$
and $K(E,d)$ is the minimum number of squares of diameter $d$ required to cover the set $E$, is called \emph{ the fractal cell entropy dimension of the set E.} It is easy to see that $\alpha^{K}(E)\ge \alpha_0(E)$.

From the definition and properties of the function  $g$ it follows that the graph of the function belongs to $\tau=|\Theta|$ squares from  $q^2$
first-rank squares (here $\tau$ is equal to $(q-1)$ for $u=0$ and $\tau$ is equal to $(q-2)$ for $u\ne 0$):
$$
\sqcap_{(i_1i_1)}=\left[\Delta^{-q} _{\underbrace{u\ldots u}_{i_1-1}i_{1}}, \Delta^{-q} _{i_1}
\right],~i_1 \in \Theta_q.
$$

The graph of  the function $f$ belongs to $\tau^2$ squares from $q^4$ second-rank squares:
$$
\sqcap_{(i_1i_2)(i_1i_2)}=\left[\Delta^{-q} _{\underbrace{u\ldots u}_{i_1-1}i_{1}\underbrace{u\ldots u}_{i_2-1}i_{2}}, \Delta^{-q} _{i_1i_2}
\right],~i_1, i_2 \in \Theta_q.
$$

The graph $\Gamma_{g}$ of the function  $g$ belongs to $\tau^m$ squares of rank $m$ with sides $q^{\alpha_1+\alpha_2+\dots+\alpha_m}$ and $q^{-m}$. Then
$$
\widehat{H}_{\alpha} (\Gamma_g)=\lim_{\overline{m \to \infty}} {\tau^m \left(\sqrt{q^{-2(\alpha_1+\alpha_2+\dots+\alpha_m)}+q^{-2m}}\right)^{\alpha}}.
$$
Since $q^{-m(q-1)} \le q^{-(\alpha_1+\alpha_2+\dots+\alpha_m)}\le q^{-m}$, we get 
$$
\widehat{H}_{\alpha} (\Gamma_g)=\lim_{\overline{m \to \infty}} {\tau^m \left(2\cdot q^{-2m}\right)^{\frac{\alpha}{2}}}=\lim_{\overline{m \to \infty}} {\tau^m \left(2\cdot q^{-2m}\right)^{\frac{\alpha}{2}}}=\lim_{\overline{m \to \infty}}
{\left(2^{\frac{\alpha}{2}}\cdot \tau^m \cdot q^{-m\alpha}\right)}
$$
$$
=\lim_{\overline{m \to \infty}}{\left(2^{\frac{\alpha}{2}}\cdot \left(\frac{\tau}{q^{\alpha}}\right)^m\right)}
$$
for $\alpha_1+\alpha_2+\dots+\alpha_m=m$
and 
$$
\widehat{H}_{\alpha} (\Gamma_g)=\lim_{\overline{m \to \infty}} {\tau^m \left(q^{-2m(q-1)}+q^{-2m}\right)^{\frac{\alpha}{2}}}=\lim_{\overline{m \to \infty}}{\left(\left(\frac{\tau^{\frac{1}{\alpha}}}{q}\right)^{2m}+\left(q^{1-q}\tau^{\frac{1}{\alpha}}\right)^{2m}\right)^{\frac{\alpha}{2}}}
$$
for $\alpha_1+\alpha_2+\dots+\alpha_m=m(q-1)$.

It is obvious that if $\left(\frac{\tau}{q^{\alpha}}\right)^m\to 0$,  $\left(\frac{\tau^{\frac{1}{\alpha}}}{q}\right)^{2m}\to 0$,and $\left(q^{1-q}\tau^{\frac{1}{\alpha}}\right)^{2m}\to 0$
 for $\alpha >1$, and the graph of the function has self-similar properties, then  $\alpha^K (\Gamma_g)=\alpha_0(\Gamma_g)=~1$. 
\end{proof}

\end{document}